\numberwithin{equation}{section}
\newtheorem{theorem}{Theorem}[section]
\newtheorem{proposition}[theorem]{Proposition}
\theoremstyle{definition}
\theoremstyle{definition} 
\newtheorem{remarks}[theorem]{Remarks}
\def\no{|\!|}
\newcommand{\cA}{\mathcal A}
\newcommand{\cB}{\mathcal B}
\newcommand{\cE}{\mathcal E}
\newcommand{\cG}{\mathcal G}
\newcommand{\cM}{\mathcal M}
\newcommand{\R}{\mathbb R}
\newcommand{\C}{\mathbb C}
\newcommand{\E}{\mathbb E}
\newcommand{\F}{\mathbb F}
\newcommand{\N}{\mathbb N}
\newcommand{\st}{\,:\,}
\newcommand{\M}{{\mathcal M}}
\newcommand{\EE}{\mathbb E}
\newcommand{\ra}{\rightarrow}
\def\l{l}
\DeclareMathSymbol{\complement}{\mathord}{AMSa}{"7B}
\def\vv<#1>{\langle #1\rangle}
\def\Vv<#1>{\bigl\langle #1\bigr\rangle}
\begin{document}


\title[Convergence of solutions to equilibria]
{On convergence of solutions to equilibria for quasilinear parabolic problems}

\author[J. Pr\"uss]{Jan Pr\"uss}
\address{Martin-Luther-Universit\"at Halle-Witten\-berg\\
         Institut f\"ur Mathematik \\
         Theodor-Lieser-Strasse 5\\
         D-06120 Halle, Germany}
\email{jan.pruess@mathematik.uni-halle.de}

\author[G. Simonett]{Gieri Simonett}
\address{Department of Mathematics\\
         Vanderbilt University \\
         Nashville, TN~37240, USA}
\email{gieri.simonett@vanderbilt.edu}

\author[R. Zacher]{Rico Zacher}
\address{Martin-Luther-Universit\"at Halle-Witten\-berg\\
         Institut f\"ur Mathematik \\
         Theodor-Lieser-Strasse 5\\
         D-06120 Halle, Germany}
\email{rico.zacher@mathematik.uni-halle.de}

\thanks{The research of G.S. was partially
supported by NSF, Grant DMS-0600870. The research of R.Z. was
partially supported by the Deutsche Forschungsgemeinschaft (DFG),
Bonn, Germany.}

\begin{abstract}
We show convergence of solutions to equilibria for quasilinear
parabolic evolution equations in situations where the set of
equilibria is non-discrete, but forms a finite-dimensional
$C^1$-manifold which is normally hyperbolic. Our results do not
depend on the presence of an appropriate Lyapunov functional as in
the \L ojasiewicz-Simon approach, but are of local nature.
\end{abstract}
\keywords{}
\date{\today}
\maketitle
\vspace{-.5cm}
{\bf Keywords:} 
{quasilinear parabolic equations, normally stable, 
center manifolds, nonlinear boundary conditions,
free boundary problems, travelling waves.}

{\bf AMS subject classification:} 
{34G20, 35K55, 35B35, 37D10, 35R35.}
\\
\noindent
\section{Introduction}
The principle of linearized stability is a well-known and powerful tool for proving stability or instability of equilibria of
nonlinear evolution equations. It is known to be true for large classes of nonlinear evolution equations, even for such which
are nonlocal. The literature on this subject is large.
Since here we are mainly interested in quasilinear
parabolic problems, we only refer to the monograph by
Lunardi \cite{Lun95}, and to \cite{Ama90, Pru03}.

In this paper we will consider the following situation:
suppose that for a nonlinear evolution equation we
have a $C^1$-{\em manifold of equilibria} $\cE$ such that at a
point $u_*\in\cE$, the kernel $N(A_0)$ of the linearization $A_0$ is
isomorphic to the tangent space of $\cE$ at $u_*$, the eigenvalue
$0$ of $A_0$ is semi-simple, and the remaining spectral part of the
linearization $A_0$ is stable. Then solutions starting nearby $u_*$
exist globally and converge to some point on $\cE$.
This result is well-known to specialists 
in the area of dynamical systems (where it is considered a folk theorem), but might be less familiar to people in the PDE community.

The situation described above occurs frequently in
applications. We call it the {\em generalized principle of
linearized stability}, and the equilibrium $u_*$ is then termed
{\em normally stable}.

A typical example for this situation to occur is the
case where the equations under consideration involve symmetries,
i.e.\ are invariant under the action of a Lie-group $\cG$. If then
$u_*$ is an equilibrium, the manifold $\cE$ includes the action of
$\cG$ on $u_*$ and the manifold $\cG u_*$ is a subset of $\cE$.

A standard method to handle situations as described above is to
refer to {\em center manifold theory}. In fact in that situation
the center manifold of the problem in question will be unique, and
it coincides with $\cE$ near $u_*$. Thus the so-called {\em
shadowing lemma} in center manifold theory implies the result.
Center manifolds are well-studied objects in the theory of nonlinear
evolution equations. For the parabolic case we refer to the
monographs \cite{Hen81,Lun95}, and to 
the publications 
\cite{BaJo89, BJL00, DPLu88,LPS06,LPS08,Mie91,Sim95, VI92}.

However, the theory of center manifolds 
is a technically difficult matter. It usually involves higher regularity of
the involved nonlinearities -
in particular concerning the shadowing property. 
Therefore it seems desirable to have a simpler, direct approach to the generalized principle of linearized stability which avoids the technicalities of center manifold theory.

The purpose of this paper is to present such an approach. It turns
out that the effort is only slightly larger than that for the proof
of the standard linearized stability result - which is simple. 
We emphasize that our approach requires only $C^1$-regularity for the nonlinearities.
By several examples we will illustrate 
that our result is applicable to a variety of 
interesting problems in different areas of applied
analysis. It is our belief that the approach devised in this
manuscript will be fruitful for the stability analysis of
equilibria for parabolic evolution equations that involve symmetries
in the way described above.

Here we would also like to mention the work
in  \cite{Cui07}, where the action of a Lie group 
has been used for the stability analysis of equilibrium solutions.
However, the approach given here is considerably more general and
flexible.

In Section 2 we formulate and prove our main result for abstract
autonomous quasilinear parabolic problems. Theorem~\ref{th:1}
implies, for instance, the main result in \cite{EsSi98} on
convergence of solutions for the Mullins-Sekerka problem. We also
show by means of examples that the conditions of Theorem~\ref{th:1}
are  necessary in order to have convergence to a single equilibrium.

In Section 3, we consider quasilinear parabolic systems with
nonlinear boundary conditions and we show that our techniques can
also be applied to this situation. Sections 4 and 5 illustrate the
scope of our main result, as we show convergence towards
equilibria for the Mullins-Sekerka model, and stability of
travelling waves for a quasilinear parabolic equation.

In Section 6 we consider the so-called {\em normally hyperbolic}
case, where the remaining part of the spectrum of $A_0$ also
contains an unstable part away from the imaginary axis. In this
situation, one cannot expect convergence of all solutions starting
near $u_*$, but only for those initial values which are on the
stable manifold.

To cover the quasilinear case our approach makes use of maximal $L_p$-regularity in an essential way. As general references
for this theory we refer to the recent publications
 \cite{DHP03, DHP06}, to the survey article \cite{Pru03},
and also to \cite{Ama95, Am04, Am05, ClLi94, Lun95}.

In a forthcoming paper these results are extended to the case
where the boundary conditions are of relaxation type, i.e.\ are coupled
with an evolution equation on the boundary, as in \cite{DPZ06}.
Problems of the last kind are important e.g.\ for the Stefan problem with
surface tension, see\ \cite{EPS03,PrSi06},
and for the two-phase Navier-Stokes problem with a free boundary.

Finally, we should like to point out that the generalized
principle of linearized stability described in the current paper
can also be adapted and applied to fully nonlinear parabolic 
equations, see \cite{PrSiZa08}.
\noindent
\section{Convergence for abstract quasilinear problems}

Let $X_0$ and $X_1$ be two Banach spaces such that
$X_1\hookrightarrow X_0$, i.e. $X_1$ is continuously and densely
embedded in $X_0$. In this section we consider the autonomous
quasilinear problem
\begin{equation}
\label{u-equation}
\dot{u}(t)+A(u(t))u(t)=F(u(t)),\quad t>0, \quad u(0)=u_0.
\end{equation}
For $1<p<\infty$ we introduce the real interpolation space
$X_\gamma:=(X_0,X_1)_{1-1/p,p}$ and we assume that there is an open
set $V\subset X_\gamma$ such that
\begin{equation}
\label{AF}
(A,F)\in C^1(V,\cB(X_1,X_0)\times X_0).
\end{equation}
Here $\cB(X_1,X_0)$ denotes the space of all bounded
linear operators from $X_1$ into $X_0$.
In the sequel we use the notation $|\cdot|_j$ to denote the norm
in the respective spaces $X_j$ for $j=0,1,\gamma$.
Moreover, for any normed space $X$,
$B_X(u,r)$ denotes the open ball in $X$ with radius $r>0$ around $u\in X$.
\smallskip\\
\noindent
Let $ \cE\subset V\cap X_1$ denote the set of  equilibrium solutions of (\ref{u-equation}), which means that
$$
u\in\cE \quad \mbox{ if and only if }\quad u\in V\cap X_1,
\; A(u)u=F(u).
$$
Given an  element $u_*\in\cE$,  we assume that $u_*$ is
contained in an $m$-dimensional manifold of equilibria. This means that there
is an open subset $U\subset\R^m$, $0\in U$, and a $C^1$-function
$\Psi:U\rightarrow X_1$,  such that
\begin{equation}
\label{manifold}
\begin{aligned}
& \bullet\
\text{$\Psi(U)\subset \cE$ and $\Psi(0)=u_*$,} \\
& \bullet\
 \text{the rank of $\Psi^\prime(0)$ equals $m$, and} \\
& \bullet\
\text{$A(\Psi(\zeta))\Psi(\zeta)=F(\Psi(\zeta)),\quad \zeta\in U.$}
\end{aligned}
\end{equation}
We assume further that near $u_*$ there are no other equilibria
than those given by $\Psi(U)$,
i.e.\ $\cE\cap B_{X_1}(u_*,{r_1})=\Psi(U)$, for some $r_1>0$.
\smallskip\\
\noindent
We suppose that
the operator $A(u_*)$ has the property of maximal $L_p$-regularity.
Introducing  the deviation $v=u-u_*$ from the equilibrium $u_*$, the equation
for $v$ then reads as
\begin{equation}
\label{v-equation}
\dot{v}(t)+A_0v(t)=G(v(t)),\quad t>0,\quad v(0)=v_0,
\end{equation}
where
$v_0=u_0-u_*$
and
\begin{equation}
\label{A0}
A_0v=A(u_*)v+(A^\prime(u_*)v)u_*-F^\prime(u_*)v\quad\hbox{for }
v\in X_1.
\end{equation}
The function $G$ can be written as $G(v)=G_1(v)+G_2(v,v)$, where
\begin{equation*}
\begin{aligned}
G_1(v)&=(F(u_*+v)\!-\!F(u_*)\!-\!F^\prime(u_*)v)
\!-\!(A(u_*+v)\!-\!A(u_*)\!-\!A^\prime(u_*)v)u_*, \\
G_2(v,w)&=-(A(u_*+v)\!-\!A(u_*))w,\quad  w\in X_1,\;v\in V_\ast,
\end{aligned}
\end{equation*}
where $V_\ast:=V-u_\ast$. It follows from \eqref{AF}
that
$G_1\in C^1(V_\ast,X_0)$ and also that
$G_2\in C^1(V_\ast\times X_1,X_0)$.
Moreover, we have
\begin{equation}
\label{G(0)}
G_1(0)=G_2(0,0)=0,
\quad G_1^\prime(0)=G_2^\prime(0,0)=0,
\end{equation}
where $G^\prime_1$ and $G^\prime_2$ denote the Fr\'{e}chet
derivatives of $G_1$ and $G_2$, respectively.
\medskip\\
Setting
$\psi(\zeta)=\Psi(\zeta)-u_*$ results in the following equilibrium
equation for problem \eqref{v-equation}
\begin{equation}
\label{equilibrium-psi}
A_0\psi(\zeta)=G(\psi(\zeta)),\quad \mbox{ for all }\;\zeta\in U.
\end{equation}
Taking the derivative with respect to $\zeta$ and using
the fact that $G^\prime(0)=0$ we conclude that
$A_0\psi^\prime(0)=0$ and this implies that
\begin{equation}
\label{tangent-space}
T_{u_\ast}(\cE)\subset N(A_0),
\end{equation}
where $T_{u_\ast}(\cE) $ denotes the tangent space of $\cE$ at
$u_\ast$.
\medskip\\
\noindent
After these preparations we can state the following result on convergence of  solutions starting near $u_*$.
\goodbreak
\begin{theorem}
\label{th:1} Let $1<p<\infty$. Suppose $u_*\in V\cap X_1$ is an
equilibrium of (\ref{u-equation}), and suppose that the functions
$(A,F)$ satisfy \eqref{AF}. Suppose further that $A(u_*)$ has the
property of maximal $L_p$-regularity. Let $A_0$, defined in
\eqref{A0}, denote the linearization of (\ref{u-equation}) at $u_*$.
Suppose that $u_*$ is normally stable, i.e.\ assume that
\begin{itemize}
\item[(i)] near $u_*$ the set of equilibria $\cE$ is a $C^1$-manifold in $X_1$ of dimension $m\in\N$,
\vspace{-4mm}
\item[(ii)] \, the tangent space for $\cE$ at $u_*$ is given by $N(A_0)$,
\item[(iii)] \, $0$ is a semi-simple eigenvalue of
$A_0$, i.e.\ $ N(A_0)\oplus R(A_0)=X_0$,
\item[(iv)] \, $\sigma(A_0)\setminus\{0\}\subset \C_+=\{z\in\C:\, {\rm Re}\, z>0\}$.
\end{itemize}
Then $u_*$ is stable in $X_\gamma$, and there exists $\delta>0$ such
that the unique solution $u(t)$ of (\ref{u-equation}) with initial
value $u_0\in X_\gamma$ satisfying $|u_0-u_*|_{\gamma}<\delta$
exists on $\R_+$ and converges at an exponential rate in $X_\gamma$
to some $u_\infty\in\cE$ as $t\rightarrow\infty$.
\end{theorem}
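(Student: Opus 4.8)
The plan is to decompose the dynamics along the manifold of equilibria and transversally to it, using the spectral projection furnished by hypothesis (iii), and then to exploit maximal $L_p$-regularity to run a fixed-point/continuation argument on the transversal component. Write $P$ for the spectral projection onto $N(A_0)$ along $R(A_0)$, and set $P_1=P$, $P_2=I-P$; by (ii) and (iii) the range of $P_1$ is the finite-dimensional tangent space $T_{u_*}(\cE)$, and by (iv) the part of $A_0$ in $R(A_0)=R(P_2)$ generates an exponentially decaying analytic semigroup, so $A_0|_{R(P_2)}$ has maximal $L_p$-regularity with exponential weight. Using $\psi$ from \eqref{equilibrium-psi} and the rank condition in \eqref{manifold}, I would first show that $\zeta\mapsto P_1\psi(\zeta)$ is a $C^1$-diffeomorphism near $0$, so that a point on $\cE$ near $u_*$ is uniquely parametrized by its projection onto $N(A_0)$; this lets me introduce new coordinates $v = (I-P_1)\text{-part} + \text{(base point on }\cE)$ splitting any nearby $v$ as $v = z + \sigma + y$ where $\sigma+y$ is measured from the equilibrium whose $P_1$-projection is $z$.

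The core is then the following: decompose \eqref{v-equation} into its $P_1$- and $P_2$-projections. The $P_2$-component satisfies a quasilinear equation $\dot y + A_0 y = P_2 G(\cdot)$ perturbed by terms coming from the $z$-motion and the curvature of $\cE$; crucially, because of \eqref{G(0)}, the nonlinearity $G$ and its interaction terms are genuinely superlinear near $0$, with small Lipschitz constants on small balls. I would set up the solution in the maximal-regularity space $\mathbb{E}(a) = H^1_p(0,a;X_0)\cap L_p(0,a;X_1)$ (with an exponential weight $e^{\omega t}$, $0<\omega<\text{Re}\,\sigma(A_0|_{R(P_2)})$) and solve the coupled system by a contraction argument on a small ball, for $y(0)=P_2 v_0$ small; the $P_1$-equation is a finite-dimensional ODE $\dot z = P_1 G(\cdot)$ whose right-hand side is quadratically small, so $z(t)$ converges to a limit $z_\infty$ at an exponential rate once $y$ decays. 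Feeding $z_\infty$ back gives the limiting equilibrium $u_\infty = u_* + \psi(\zeta(z_\infty))$, and stability in $X_\gamma$ follows from the embedding $\mathbb{E}(\infty)\hookrightarrow BUC(\R_+;X_\gamma)$ together with the smallness of all the estimates.

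**Main obstacle.**
The delicate point is the quasilinear nature: the leading operator in \eqref{u-equation} is $A(u(t))$, not the frozen $A_0$, so before invoking maximal $L_p$-regularity one must rewrite $\dot v + A(u_*+v)v = F(u_*+v)$ as $\dot v + A_0 v = G(v)$ and control $G_2(v,v) = -(A(u_*+v)-A(u_*))v$, which contains a full $X_1$-factor. The standard device — perturbation of maximal regularity: for $v$ small in $X_\gamma$, $A(u_*+v)$ is a small $\cB(X_1,X_0)$-perturbation of $A_0$ and hence still has maximal regularity — requires care because $v$ ranges over the maximal-regularity space where its $X_\gamma$-norm need not be uniformly small a priori; one closes this by choosing the time interval and the initial ball small and using the trace embedding $\mathbb{E}(a)\hookrightarrow BUC([0,a];X_\gamma)$ with a constant independent of $a$ (this is where the exponential weight and the precise interpolation identity $X_\gamma=(X_0,X_1)_{1-1/p,p}$ are essential). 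The second subtlety is bookkeeping the $z$–$y$ coupling so that the contraction constant stays below $1$ uniformly in time; this is handled by absorbing the finite-dimensional $z$-dynamics, whose nonlinearity is quadratic, into the same fixed-point map and exploiting that $\dot z$ is integrable in time once $y\in L_p$-with-weight.
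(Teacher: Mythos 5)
Your proposal follows essentially the same route as the paper: spectral splitting via the semi-simple eigenvalue, parametrization of $\cE$ over $N(A_0)$ by the inverse function theorem, a change of variables in which the manifold of equilibria becomes $\{y=0\}$, weighted maximal $L_p$-regularity for the stable part, the trace embedding for stability, and direct integration of the finite-dimensional center equation. One small correction in the wording: convergence of the center component $z(t)$ does not follow from the right-hand side being ``quadratically small'' (since $z$ itself stays bounded but need not decay, a quadratic bound in $(z,y)$ would not be integrable in time); the operative property, which your coordinates do deliver, is that the right-hand side of the $z$-equation vanishes identically on $\{y=0\}$, so that $|\dot z|\le \beta\,|y(t)|_1$ with $y$ exponentially decaying.
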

\begin{proof}
(a) Note first that assumption (iii) implies that $0$ is an isolated
spectral point of $\sigma(A_0)$, the spectrum of $A_0$. According to
assumption (iv) $\sigma(A_0)$ admits a decomposition into two
disjoint nontrivial parts with
\begin{equation*}
\sigma(A_0)=\{0\}\cup \sigma_s , \quad
\sigma_s\subset \C_+=\{z\in\C:\,{\rm Re}\, z>0\}.
\end{equation*}
The spectral set $\sigma_c:=\{0\}$ corresponds to the center part, and
$\sigma_s$ to the stable part of the analytic $C_0$-semigroup $e^{-A_0 t}$, or equivalently
of the Cauchy problem $\dot{w}+A_0w=f$.
\smallskip\\
\noindent In the following, we let $P^l$, $l\in\{c,s\}$, denote the
spectral projections according to the spectral sets $\sigma_c=\{0\}$
and $\sigma_s$, and we set $X^l_j:=P^l X_j$ for $l\in\{c,s\}$ and
$j\in\{0,1,\gamma\}$. The spaces $X^\l_j$ are equipped with the
norms $|\cdot| _j$ for $j=0,1,\gamma$. We have the topological
direct decomposition
\begin{equation*}
X_1=X_1^c\oplus X_1^s,\quad X_0=X_0^c\oplus X_0^s,
\end{equation*}
and this decomposition reduces $A_0$ into $A_0=A_c\oplus A_s$,
where  $A_l$  is the part of $A_0$ in $X^l_0$
for $l\in\{c,s\}$.
Since $\sigma_c=\{0\}$ is compact it follows that
$X_0^c\subset X_1$.
Therefore, $X_0^c$ and $X_1^c$ coincide as
vector spaces. In the following, we will
just write $X^c=(X^c,|\cdot|_j)$
for either of the spaces $X^c_0$ and $X^c_1$.
We note that $N(A_0)$, the kernel of $A_0$, is contained in $X^c$.
The operator $A_s$
inherits the property of $L_p$-maximal regularity from $A_0$.
Since $\sigma(A_s)=\sigma_s\subset \C_+$
we obtain that the Cauchy problem
\begin{equation}
\label{Cauchy}
\dot w + A_s w=f,\quad w(0)=0,
\end{equation}
also enjoys the property of maximal regularity,
even on the interval $J=(0,\infty)$.
In fact the following estimates are true.
For any $a\in (0,\infty]$
let
\begin{equation}
\label{EE}
\EE_0(a)=L_p((0,a);X_0),
\quad \EE_1(a)=H^1_p((0,a);X_0)\cap L_p((0,a);X_1).
\end{equation}
The natural norms in $\EE_j(a)$ will be denoted by 
$|\!|\cdot|\!|_{\EE_j(a)}$ for $j=0,1$. Then the  Cauchy problem \eqref{Cauchy} has for
each
 $f\in L_p((0,a);X_0^s)$ a unique solution
$$
w\in H^1_p((0,a);X^s_0)\cap L_p((0,a);X^s_1),
$$
and there exists a constant $M_0$ such that $|\!|w
|\!|_{\EE_1(a)}\le M_0 |\!|f |\!|_{\EE_0(a)}$ for every $a>0$, and
every function $f\in L_p((0,a);X^s_0)$. In fact, since
$\sigma(A_s-\omega)$ is still contained in $\C_+$ for $\omega$ small
enough, we see that the operator $A_s-\omega$ enjoys the same
properties as $A_s$. Therefore, every solution of the Cauchy problem
$\eqref{Cauchy}$ satisfies the estimate
\begin{equation}
\label{M0}
|\!|e^{\sigma t} w |\!|_{\EE_1(a)}
\le M_0 |\!|e^{\sigma t}f |\!|_{\EE_0(a)},
\quad \sigma\in [0,\omega], \quad a>0,
\end{equation}
for $f\in L_p((0,a);X^s_0)$, where $M_0=M_0(\omega)$ for $\omega>0$
fixed; cf.\ \cite[Sec.\ 6]{Pru03}. Furthermore, there exists a constant
$M_1>0$ such that
\begin{equation}
\label{M1}
|\!|e^{\omega t}e^{-A_s t}P^su|\!|_{\EE_1(a)}
+\sup_{t\in [0,a)}|e^{\omega t}e^{-A_st}P^su|_\gamma
\le M_1|P^su|_\gamma\,
\end{equation}
for every $u\in X_\gamma$ and $a\in (0,\infty]$.
For future use we note that \begin{equation}
\label{zero-trace}
\sup_{t\in [0,a)}|w(t)|_\gamma\le c_0
\no w\no_{\EE_1(a)}
\quad\text{for all $w\in \EE_1(a)$ with $w(0)=0$}
\end{equation}
with a constant $c_0$ that is independent of $a\in (0,\infty]$,
see for instance the proof of \cite[Proposition 6.2]{PSS07}.
We remind that $N(A_0)$ is contained in $X^c$.
\medskip\\
\noindent
(b) It follows from the considerations above and
assumptions (i)-(iii) that in fact
\begin{equation*}
N(A_0)=X^c\quad\text{and}\quad \text{dim}(X^c)=m.
\end{equation*}
As $X^c$ has finite dimension,
the norms $|\cdot|_j$ for $j=0,1,\gamma$ are equivalent,
and we equip $X^c$ with one of these equivalent norms,
say with $|\cdot|_0$.
Let us now consider the mapping
\begin{equation*}
g:U\subset \R^m\to X^c,\quad
g(\zeta):=P^c\psi(\zeta),\quad \zeta\in U.
\end{equation*}
It follows from our assumptions that
$g^\prime(0)=P^c\psi^\prime(0):\R^m\to X^c$
is an isomorphism
(between the finite dimensional spaces $\R^m$ and $X^c$).
By the inverse function theorem, $g$ is a
$C^1$-diffeomorphism of a neighborhood of $0$ in $\R^m$
into a neighborhood, say $B_{X^c}(0,\rho_0)$, of $0$ in $X^c$.
Let $g^{-1}:B_{X^c}(0,\rho_0)\to U$ be the inverse mapping.
Then $g^{-1}:B_{X^c}(0,\rho_0)\to U$ is $C^1$ and $g^{-1}(0)=0$.
Next we set
$\Phi(x):=\psi(g^{-1}(x))$ for $x\in B_{X^c}(0,\rho_0)$
and we note that
\begin{equation*}
\Phi\in C^1(B_{X^c}(0,\rho_0),X_1^s), \quad
\Phi(0)=0,
\quad \{\Phi(x)+u_\ast \st x\in B_{X^c}(0,\rho_0)\}=\cE\cap W,
\end{equation*}
where $W$ is an appropriate neighborhood of $u_\ast$ in $X_1$.
One readily verifies that
\begin{equation*}
P^c \Phi(x)=((P^c\circ \psi)\circ g^{-1})(x)=
(g\circ g^{-1})(x)=x,\quad x\in  B_{X^c}(0,\rho_0),
\end{equation*}
and this yields
$\Phi(x)=P^c\Phi(x)+P^s\Phi(x)=x+P^s\Phi(x)$ for
$x\in B_{X^c}(0,\rho_0)$.
Setting $\phi(x):=P^s\Phi(x)$ we conclude that
\begin{equation}
\label{phi}
\phi\in C^1(B_{X^c}(0,\rho_0),X_1^s),\quad \phi(0)=\phi^\prime (0)=0,
\end{equation}
and that
$$
\{x+\phi(x)+u_\ast \st x\in B_{X^c}(0,\rho_0)\}=\cE\cap W,
$$
where $W$ is a neighborhood of $u_\ast$ in $X_1$.
This shows that the manifold $\cE$
can be represented
as the (translated) graph of the function $\phi$ in a neighborhood
of $u_\ast$. Moreover,
the tangent space of $\cE$ at $u_\ast$
coincides with $N(A_0)=X^c$.
By applying the projections $P^l$, $l\in\{c,s\}$, to equation \eqref{equilibrium-psi}
and using that $x+\phi(x)=\psi(g^{-1}(x))$
for $x\in B_{X^c}(0,\rho_0)$, and that $A_c\equiv 0$,
we obtain the following equivalent system of equations
for the equilibria of \eqref{v-equation}
\begin{equation}
\label{equilibria-phi}
P^cG(x+\phi(x))=0,\quad
P^s G(x+\phi(x))=A_s\phi(x),
\quad x\in B_{X_c}(0,\rho_0).
\end{equation}
Finally, let us also agree that $\rho_0$ has already been chosen small enough
so that
\begin{equation}
\label{estimate-phi}
|\phi^\prime(x)|_{\cB(X^c,X_1^s)}\le 1 ,\quad
|\phi(x)|_1\le  |x|,\quad x\in B_{X^c}(0,\rho_0).
\end{equation}
This can always be achieved, thanks to \eqref{phi}.
\medskip\\
\noindent
(c) Introducing the new variables
\begin{equation*}
\begin{aligned}
&x=P^c v=P^c (u-u_*), \\
&y=P^sv-\phi(P^cv)=P^s(u-u_*)-\phi(P^c (u-u_*))
\end{aligned}
\end{equation*}
we then
obtain the following system of evolution equations
in $X^c\times X^s_0 $
\begin{equation}
\label{system}
\left\{
\begin{aligned}
\dot{x}=T(x,y),      \quad &x(0)=x_0, \\
\dot{y}+A_sy=R(x,y), \quad &y(0)=y_0,\\
\end{aligned}
\right.
\end{equation}
with $x_0=P^cv_0$ and $y_0=P^sv_0-\phi(P^cv_0)$,
where the functions $T$ and $R$ are given by
\begin{equation*}
\begin{aligned}
&T(x,y)=P^c G(x+\phi(x)+y), \\
&R(x,y)=P^sG(x+\phi(x)+y)-A_s\phi(x)-\phi^\prime(x)T(x,y).
\end{aligned}
\end{equation*}
Using the equilibrium equations \eqref{equilibria-phi}, the
expressions for $R$ and $T$ can be rewritten as
\begin{equation}
\label{R-T}
\begin{aligned}
&T(x,y)=P^c \big(G(x+\phi(x)+y)-G(x+\phi(x))\big), \\
&R(x,y)=P^s \big(G(x+\phi(x)+y)-G(x+\phi(x))\big)-\phi^\prime(x)T(x,y).
\end{aligned}
\end{equation}
Although the term $P^cG(x+\phi(x))$ in $T$ is zero, see
\eqref{equilibria-phi}, we include it here for reasons of symmetry,
and for justifying the estimates for $T$ below. Equation \eqref{R-T}
immediately yields
\begin{equation*}
\label{R=T=0}
T(x,0)=R(x,0)=0\quad\text{for all }\ x\in B_{X^c}(0,\rho_0),
\end{equation*}
showing that
the equilibrium set $\cE$ of \eqref{u-equation}
near $u_*$ has been reduced to the set
$ B_{X^c}(0,\rho_0)\times \{0\}\subset X^c\times X^s_1$.
\medskip\\
Observe also that there is a unique correspondence between the solutions of \eqref{u-equation}
close  to $u_*$ in $X_\gamma$ and those of (\ref{system}) close to $0$.
We call  system \eqref{system} the {\em normal form}
of \eqref{u-equation} near
its normally stable equilibrium $u_*$.
\medskip\\
(d) From the representation of $G$ and \eqref{G(0)}
we  obtain the following estimates for $G_1$ and $G_2$:
for given $\eta>0$ we may choose $r=r(\eta)>0$ small enough such that
$$
|G_1(v_1)-G_1(v_2)|_{0}\le \eta |v_1-v_2|_\gamma,
\quad  v_1,v_2\in B_{X_\gamma}(0,r).
$$
Moreover, there is a constant $L>0$
such that
\begin{equation*}
\begin{aligned}
&|G_2(v_1,w)-G_2(v_2,w)|_0
\le L |w|_{1}\,|v_1-v_2|_\gamma,
&& w\in X_1, &&v_1,v_2\in B_{X_\gamma}(0,r), \\
&|G_2(v,w_1)-G_2(v,w_2)|_0 \le L\; r\;|w_1-w_2|_1,
&& w_1,w_2\in X_1, &&v\in B_{X_\gamma}(0,r).
\end{aligned}
\end{equation*}
We remark that $L$
does not depend on $r\in (0,r_0]$
with $r_0$ appropriately chosen.
Combining these estimates we have
\begin{equation}
\label{G-estimate}
\begin{aligned}
|G(v_1)-G(v_2)|_0 &\le
\big(\eta+L|v_2|_1\big)|v_1-v_2|_\gamma +Lr|v_1-v_2|_1 \\
&\le
C_0\big(\eta+r+|v_2|_1\big)|v_1-v_2|_1
\end{aligned}
\end{equation}
for all $v_1,v_2\in  B_{X_\gamma}(0,r)\cap X_1$,
where  $C_0$ is independent of $r\in (0,r_0]$.
\medskip\\
In the following, we will always assume
that $r\in (0,r_0]$ and $r_0\le 3\rho_0$.
Taking $v_1=x+\phi(x)+y$ and $v_2=x+\phi(x)$
in \eqref{G-estimate}
we infer from
\eqref{estimate-phi} and \eqref{R-T} that
\begin{equation}
\label{estimate-R-T}
\begin{aligned}
|T(x,y)|,\ |R(x,y)|_0 \le C_1\big(\eta+r+|x+\phi(x))|_1\big)|y|_1
\le \beta |y|_1,
\end{aligned}
\end{equation}
for all $x\in \bar B_{X^c}(0,\rho)$,
$y\in \bar B_{X^s_\gamma}(0,\rho)\cap X_1$
and all $\rho\in (0,r/3)$,
where $\beta=C_2(\eta+r)$, and
where $C_1$ and $C_2$ are uniform constants.
Suppose that $\eta$ and, accordingly, $r$ were already chosen
small enough so that
\begin{equation}
\label{beta}
M_0\beta= M_0C_2(\eta +r)\le 1/2.
\end{equation}
\\
(e) By \cite[Theorem 3.1]{Pru03}, problem (\ref{v-equation}) admits
for each $v_0\in B_{X_\gamma}(0,r)$ a unique local strong solution
\begin{equation}
\label{solution}
v\in \E_1(a)\cap C([0,a]; X_\gamma)
\end{equation}
for some number $a>0$.
This solution can be extended to a maximal interval of existence $[0,t_*)$.
If $t_*$ is finite, then either $v(t)$ leaves the ball
$B_{X_\gamma}(0,r)$ at time $t_*$, or the limit $\lim_{t\ra t_*} v(t)$
does not exist in $X_\gamma$.
We show that this cannot happen for initial values
$v_0\in B_{X_\gamma}(0,\delta)$, with $\delta\le r$
to be chosen later.
\medskip\\
\noindent Suppose that $x_0\in B_{X^c}(0,N\delta)$ and $y_0\in
B_{X^s_\gamma}(0,N\delta)$ are given, where the number $\delta$ will
be determined later and $N:=\no P^c\no_{\cB(X_0)}+\no
P^s\no_{\cB(X_\gamma)}$. Let $t_\ast$ denote the existence time for
the solution $(x(t),y(t))$ of system \eqref{system} with initial
values $(x_0,y_0)$, or equivalently, for the solution $v(t)$ of
\eqref{v-equation} with initial value $v_0=x_0+\phi(x_0)+y_0$. Let
$\rho$ be fixed so that the estimates in \eqref{estimate-R-T} hold.
Set
\begin{equation*}
t_1:=t_1(x_0,y_0):=\sup\{t\in (0,t_\ast)\st
|x(\tau)|,\,|y(\tau)|_\gamma\le \rho,\ \tau\in [0,t]\}
\end{equation*}
and suppose that $t_1<t_\ast$.
Due to \eqref{M0}--\eqref{M1} and \eqref{estimate-R-T}
we obtain
\begin{equation*}
\begin{aligned}
 \no e^{\omega t}y|\!|_{\EE_1(t_1)}
& \le M_1|y_0|_\gamma + M_0|\!|e^{\omega t}R(x,y)|\!|_{\EE_0(t_1)} \\
&\le M_1|y_0|_\gamma +M_0\beta |\!|e^{\omega t}y|\!|_{\EE_1(t_1)}.
\end{aligned}
\end{equation*}
This yields with \eqref{beta}
\begin{equation}
\label{y-estimate-1}
\no e^{\sigma t}y|\!|_{\EE_1(t_1)}\le 2 M_1|y_0|_\gamma\,,
\quad\sigma\in [0,\omega].
\end{equation}
Using this estimate as well as
\eqref{M1}--\eqref{zero-trace} we further have for $t\in [0,t_1)$
\begin{equation*}
\begin{aligned}
|e^{\sigma t}y(t)|_{\gamma}&\le |e^{\sigma t}y(t)-e^{\sigma
t}e^{-A_s t}y_0|_{\gamma}+|e^{\sigma t}e^{-A_s t}y_0|_{\gamma}\\
&\le c_0
\no e^{\sigma t}y(t)-e^{\sigma t}e^{-A_st}y_0\no_{\EE_1(t_1)}+M_1|y_0|_{\gamma}\\
& \le (3c_0M_1+M_1)|y_0|_{\gamma},
\end{aligned}
\end{equation*}
which yields with $M_2=(3c_0+1) M_1$,
\begin{equation}
\label{y-estimate-gamma}
|y(t)|_{\gamma}\le M_2e^{-\sigma t}|y_0|_{\gamma},\quad
t\in [0,t_1),\ \sigma\in [0,\omega].
\end{equation}
We deduce  from the equation for $x$,
the estimate for $T$ in \eqref{estimate-R-T},
and H\"older's inequality that
\begin{equation*}
\begin{aligned}
|x(t)|\,& \le |x_0|+\int_0^t|T(x(s),y(s))|\,ds \\
&\le |x_0|+\beta\int_0^t |y(s)|_1\,ds\\
&\le |x_0|+\beta\Big(\int_0^\infty e^{-\omega
sp^\prime}\,ds\Big)^{1/p\prime} \,
\no e^{\omega t}y\no_{\EE_1(t_1)} \\
& = |x_0|+ \beta c_1\no e^{\omega t}y\no_{\EE_1(t_1)} \le
|x_0|+M_3|y_0|_{\gamma},\quad t\in [0,t_1),
\end{aligned}
\end{equation*}
where $M_3=M_1 c_1/M_0$ and $c_1=(1/[\omega
p^\prime])^{1/p^\prime}$.
Summarizing, we have shown that
$|x(t)|+|y(t)|_\gamma\le |x_0|+(M_2+M_3)|y_0|_\gamma
$
for all  $t\in [0,t_1)$.
By continuity and the assumption
$t_1<t_\ast$ this inequality also holds
for $t=t_1$. Hence
\begin{equation*}
|x(t_1)|+|y(t_1)|_\gamma\le |x_0|+(M_2+M_3)|y_0|_\gamma
\le (1+M_2+M_3)N\delta<\rho/2,
\end{equation*}
provided
$\delta\le {\rho}/[2N(1+M_2+M_3)]$.
This contradicts the definition of $t_1$ and
we conclude that $t_1=t_\ast$.

In the following, we assume that $\delta\le {\rho}/[2N(1+M_2+M_3)]$.
Then the estimates derived above and \eqref{estimate-phi} yield the
uniform bounds
\begin{equation}
\no v\no_{\EE_1(a)}+\sup_{t\in [0,a)}|v(t)|_\gamma
\le M,
\end{equation}
for every initial value $v_0\in B_{X_\gamma}(0,\delta)$ and every
$a<t_\ast$.
It follows from Corollary~3.2
in \cite{Pru03}
that the solution $v(t)$ of \eqref{v-equation} exists on $\R_+$.
\medskip\\
(f)
By repeating the above estimates on the interval $(0,\infty)$ we obtain
the estimates
\begin{equation}
\label{y-gamma-infty} |x(t)|\le |x_0|+M_3|y_0|_\gamma,\quad
|y(t)|_\gamma\le M_2 e^{-\omega t} |y_0|_\gamma, \quad t\in
[0,\infty),
\end{equation}
for all $x_0\in B_{X^c}(0,N\delta)$ and
$y_0\in B_{X^s_\gamma}(0,N\delta)$.
Moreover,
\begin{equation*}
\lim_{t\ra\infty} x(t)= x_0 +\int_0^\infty T(x(s),y(s))ds=:x_\infty
\end{equation*}
exists since the integral is absolutely convergent.
Next observe that we in fact obtain exponential convergence
of $x(t)$ towards $x_\infty$, as
\begin{equation*}
\begin{aligned}
|x(t)-x_\infty|=&\left|\int_t^\infty T(x(s),y(s))\,ds\right| \\
&\le \beta \int_t^\infty |y(s)|_1\,ds  \\
&\le \beta \left (\int_t^\infty e^{-\omega s p^\prime
}\,ds\right)^{1/p^\prime}
\no e^{\omega s}y\no_{\EE_1(\infty)} \\
&\le M_4  e^{-\omega t} |y_0|_\gamma, \quad t\ge 0.
\end{aligned}
\end{equation*}
This yields existence of
\begin{equation*}
v_\infty:=\lim_{t\ra\infty} v(t)=\lim_{t\ra\infty} x(t)+\phi(x(t))+y(t)=x_\infty+\phi(x_\infty).
\end{equation*}
Clearly, $v_\infty$ is an equilibrium
for equation \eqref{v-equation}, and $v_\infty+u_\ast\in\cE$ is an equilibrium for \eqref{AF}.
Due to \eqref {estimate-phi}, \eqref{y-gamma-infty}
and the exponential estimate for $|x(t)-x_\infty|$
we get
\begin{equation}
\label{exponential-v}
\begin{aligned}
|v(t)-v_\infty|_\gamma
&=|x(t)+\phi(x(t))+y(t)-v_\infty|_\gamma \\
&\le |x(t)-x_\infty|_\gamma +|\phi(x(t))-\phi(x_\infty)|_\gamma +|y(t)|_\gamma \\
&\le ({C}M_4+M_2)e^{-\omega t}|y_0|_\gamma\\
&\le Me^{-\omega t}|P^sv_0-\phi(P^cv_0)|_\gamma\,,
\end{aligned}
\end{equation}
thereby completing the proof of the second part of
Theorem~\ref{th:1}. Concerning stability,
note that given $r>0$ small enough we may choose
$0<\delta\le r$ such that the solution starting in
$B_{X_\gamma}(u_*,\delta)$ exists on $\R_+$ and stays within $B_{X_\gamma}(u_*,r)$.
\end{proof}
\noindent
\begin{remarks}
(a)
Theorem~\ref{th:1} shows, given that situation,
that near $u_*$ the set of equilibria constitutes the (unique) center manifold for \eqref{u-equation}.

\medskip
(b)
It is worthwhile to point
out a slightly different way to obtain the
function $\phi$ used in the proof of
Theorem~\ref{th:1}.
Applying the projections $P^s$ and $P^c$
to the equilibrium equation \eqref{equilibrium-psi}
yields the following equivalent system of equations near $v=0$
\begin{equation}
\label{impli}
A_s z=P^sG(x+z),\quad A_c x =P^c G(x+z),
\end{equation}
with $z=P^s\psi(\zeta) $ and
$x=P^c\psi(\zeta)$.
Since $G(0)=G^\prime(0)=0$ and $A_s$ is invertible, by the implicit function theorem we may solve the first equation
for $z$ in terms of $x$,
i.e. there is a $C^1$-function
$\phi:B_{X^c}(0,\rho_0) \rightarrow X^s_1$
such that
\begin{equation*}
\phi(0)=0\quad\text{and}\quad
A_s\phi(x)=P^sG(x+\phi(x)), \quad x\in B_{X^c}(0,\rho_0).
\end{equation*}
As $x+\phi(x)$ is the unique solution of the first
equation in \eqref{impli} we additionally have
 $A_c x=P^cG(x+\phi(x))$,
as well as $P^s\psi(\zeta)=\phi(P^c \psi(\zeta))$ for all $\zeta\in U$.
Since $G^\prime(0)=0$ we obtain
$A_s\phi^\prime(0)=P^sG^\prime(0)=0$ and this implies
$\phi^\prime (0)=0$.
This shows that $\cE\subset \cM$
with $\M=\{x+\phi(x)+u_\ast\st x\in B_{X^c}(0,\rho_0)\}$
in a neighborhood of $u_\ast$ in $X_1$.

$\cM$ is a $C^{1}$-manifold
of dimension $\ell:=\text{dim}\,(X^c)$
with tangent space $T_{u_\ast}(\cM)=X^c$
and $\cE$ is a submanifold in $\cM$.
In general, $\cE$ has lower dimension than
$\cM$. Our assumptions in Theorem~\ref{th:1}
do in fact exactly amount to asserting
that $\cE$ and $\cM$ are of equal dimension.
Since $\cE\subset\cM$ we can then conclude that
they coincide in a neighborhood of $u_\ast$.

\medskip
(c)
An inspection of the argument given above
shows that in fact all equilibria
of equation~\eqref{u-equation}
that are close to the equilibrium $u_\ast$
are contained
in a manifold
$\cM=\{x+\phi(x)+u_\ast:x\in B_{X^c}(0,\rho_0)\}$
such that $\phi(0)=\phi^\prime(0)=0$,
with no additional assumptions on the structure
of the equilibria.
To see this, let us once more consider the equation
\begin{equation}
\label{D}
A_s z=P^sG(x+z),\quad x\in X^c,\quad z\in X^s_1\,.
\end{equation}
Clearly, $x=z=0$ is a solution. Exactly as in the remark above, we
can solve \eqref{D} by the implicit function theorem for $z$ in
terms of $x$, obtaining a $C^1$-function $\phi:B_{X^c}(0,\rho_0)
\rightarrow X^s_1$ with $\phi(0)=\phi^\prime(0)=0$. If $v\in X_1$ is
an equilibrium for the evolution equation \eqref{v-equation} close to
$0$, then the pair $x=P^cv$, $z=P^sv$ necessarily satisfies equation
\eqref{D}, and therefore lies on the graph of $\phi$.

\medskip
(d)
We  illustrate by means of examples
that convergence to equilibria fails if one of the
conditions (i)-(iii) in Theorem~\ref{th:1}
does not hold.

\bigskip

\noindent {\bf Example 1.} Consider in $G:=\R^2\setminus\{0\}$ the
ODE system
\begin{equation}
\label{Ex1}
\begin{array}{r@{\,=\,}l}
\dot{x} & (x+y)(1-\sqrt{x^2+y^2}),\\
\dot{y} & (y-x)(1-\sqrt{x^2+y^2}).
\end{array}
\end{equation}
In polar coordinates (\ref{Ex1}) reads as
\[
\begin{array}{r@{\,=\,}l}
\dot{r} & -r(r-1), \\
\dot{\theta} & r-1,
\end{array}
\]
thus the set of equilibria $\cE$ of (\ref{Ex1}) in G is the unit circle,
and for any initial value $(x_0,y_0)\in G$ we have $r(t)\to 1$ as
$t\to \infty$. Since the phase portrait is rotationally invariant we
may restrict the stability analysis for $\cE$ to one equilibrium,
say $u_*=(0,1)$. Denoting the right-hand side of (\ref{Ex1}) by
$F(x,y)$, we have $F\in C^1(G)$, and
\[
A_0=-F'(u_*)=\left[
\begin{array}{c@{\;\;}c}
  0 & 1 \\
  0 & 1 \\
\end{array}
\right].
\]
The eigenvalues of $A_0$ are $0$ and $1$ with eigenvectors $(1,0)$
and $(1,1)$, respectively. Thus $0$ is semi-simple, and $N(A_0)$
coincides with the tangent space $T_{u_*}(\cE)$. Consequently, $u_*$ is
normally stable, and hence we can apply Theorem~\ref{th:1}
to conclude that each trajectory converges to
some point on the unit circle as $t\to\infty$.
It is readily seen that the trajectories satisfy
the relation $\theta(r)=c_0-\ln r$ for some appropriate constant
$c_0$, and this confirms that $\theta(r)$ converges as $r\to 1$.

\bigskip

\noindent {\bf Example 2.} In this example, we consider in
$G:=\R^2\setminus\{0\}$ the ODE system
\begin{equation}
\label{Ex2}
\begin{array}{r@{\,=\,}l}
\dot{x} & -x(\sqrt{x^2+y^2}-1)^3-y(\sqrt{x^2+y^2}-1)^m,\\
\dot{y} & -y(\sqrt{x^2+y^2}-1)^3+x(\sqrt{x^2+y^2}-1)^m,
\end{array}
\end{equation}
with $m=1$. In polar coordinates (\ref{Ex2}) reads as
\[
\begin{array}{r@{\,=\,}l}
\dot{r} & -r(r-1)^3, \\
\dot{\theta} & (r-1)^m.
\end{array}
\]
Again, the set of equilibria $\cE$ of (\ref{Ex2}) in $G$ is the unit
circle. As above, we may restrict the stability analysis for $\cE$
to one equilibrium, say $u_*=(0,1)$. Denoting the right side of
\eqref{Ex2} by $F(x,y)$ we obtain (in case $m=1$)
\[
A_0=-F'(u_*)=\left[
\begin{array}{c@{\;\;}c}
  0 & 1 \\
  0 & 0 \\
\end{array}
\right].
\]
Clearly, $\{0\}$ is an eigenvalue of $A_0$
with algebraic multiplicity $2$,
and $N(A_0)=\text{span}\{(1,0)\}$.
Therefore, the eigenvalue $\{0\}$
has geometric multiplicity $1$ and algebraic
multiplicity $2$.
So we have the following situation:
\begin{itemize}
\item[(i)]
$\cE=\{(x,y)\st x^2+y^2=1\}$ is a smooth manifold
of dimension $1$ in $\R^2$,
\item[(ii)]
the tangent space of $\cE$
at $u_\ast$ is given by $N(A_0)$,
\item[(iii)]
$\{0\}$ is {\sl not} semi-simple,
\end{itemize}
and hence condition (iii) of Theorem~\ref{th:1} is not satisfied. We
will show that the trajectories of system \eqref{Ex2} still converge
towards the unit circle, but will spiral around the circle at
increasing speed as $r\to 1$. This can be seen as follows. First we
observe that $V(x,y):=(r-1)^2$ with $r=\sqrt{x^2+y^2}$ is a Lyapunov
function for system \eqref{Ex2}, since for every solution $(x,y)$ of
\eqref{Ex2} we have
$$
\frac{d}{dt}V(x,y)=2\dot r(r-1)=-2r(r-1)^4\le 0.
$$
So $r(t)\to 1$ as $t\to\infty$ for every solution.
On the other hand one verifies that
the trajectories satisfy the relation
$\theta(r)=c_0+\ln({|r-1|}/{r})+{1}/(r-1).$
This shows that all trajectories spiral
around $\cE$ with increasing speed, in clockwise direction
as $r\nearrow 1$, and in counter-clockwise direction as
$r\searrow 1$.
\medskip\\
\noindent {\bf Example 3:}
Here we consider system \eqref{Ex2} with $m=2$.
This example is similar to the one in
\cite[p. 4]{Aul84}.
In this case we have
\[
A_0=-F'(u_*)=\left[
\begin{array}{c@{\;\;}c}
  0 & 0 \\
  0 & 0 \\
\end{array}
\right].
\]
Clearly, $\{0\}$ is now an eigenvalue with geometric multiplicity
$2$, and $N(A_0)=\R^2$. So condition (ii) of Theorem~\ref{th:1} is
not satisfied. The function $V$ from the previous example is again a
Lyapunov function, and this yields $r(t)\to 1$ as $t\to\infty$. The
trajectories satisfy $\theta(r)=c_0-\ln(|r-1|/r),$ showing that they
spiral counter-clockwise with increasing speed around the unit
circle as $r\to 1$.

\bigskip
\end{remarks}

\section{Quasilinear parabolic problems with nonlinear boundary conditions}
The analysis in the previous section applies in particular to quasilinear parabolic systems of partial differential equations with
{\em linear autonomous} boundary conditions. In this section we show how this can be extended to the case where also the boundary conditions
are nonlinear. For this purpose, let $\Omega\subset\R^n$ be an open bounded domain with boundary $\partial \Omega\in C^{2m}$. The outer normal at a
point $x\in\partial \Omega$ will be denoted by $\nu(x)$. Consider the problem
\begin{equation}
\label{eq:1}
\left\{
\begin{aligned}
\partial_t u(t)+A(u(t))u(t)&=F(u(t)) &\text{in }& \Omega, \\
  B_j(u(t))&=0  &\text{on }& \partial \Omega,\quad j\in\{1,\cdots,m\},\\
u(0)&=u_{0} &\text{in }& \Omega.
\end{aligned}
\right.
\end{equation}
Here we employ the maps
\begin{align}
[A(u)v](x)= & \sum_{|\alpha|= 2m} a_\alpha(x,u(x), \nabla u(x),\cdots,
     \nabla^{2m-1}u(x)) \,D^\alpha v(x), \quad x\in\Omega,\nonumber\\
[F(u)](x)= & \,f(x,u(x),\nabla u(x), \cdots,
     \nabla^{2m-1}u(x)),\quad x\in \Omega,\label{eq:1.1}\\
[B_j(u)](x)= &  \,b_{j}(x,u(x),\nabla u(x), \cdots,
     \nabla^{m_j}u(x)),\quad x\in\partial \Omega,\nonumber
\end{align}
for functions   $u\in BC^{2m-1}(\overline{\Omega};\C^N)$,
 and $v\in W^{2m}_p(\Omega;\C^N)$. The numbers $m_j$ are integers
strictly smaller than $2m$, and with $E=\C^N$, the coefficients are subject to the following regularity assumptions
\begin{itemize}
\item[{\bf (R)}] $a_\alpha \in C^1(E\times E^N\times  \cdots \times
E^{N^{2m-1}};BC(\overline{\Omega};\cB(E)))$
 for  $|\alpha|= 2m,$\\
$f\in C^1(E\times  E^N\times\cdots \times E^{N^{2m-1}};BC(\overline{\Omega};E)),$\\
$b_{j}\in C^{2m+1-m_j} (\partial\Omega  \times E \times E^N\times
\cdots \times
 E^{N^{m_j}}; E) \;$ for   $j\in\{1,\cdots,m\}$.
\end{itemize}
We set $B=(B_1,\cdots,B_m)$. We point out that, for a fixed
$u_0\in BC^{2m-1}(\overline{\Omega};\C^N)$, $A(u_0)$ is a linear
differential operator of order $2m$ with bounded coefficients;
whereas $F$ contains all terms involving derivatives of order $|\alpha|<2m$.

We will employ the $L_p$-setting for this problem as in \cite{LPS06}, hence we fix $p>n+2m$ and the basic spaces
\begin{equation*}
X_0=L_p(\Omega;E),\quad X_1 = W^{2m}_p(\Omega;E),
\quad X_\gamma=(X_0,X_1)_{1-1/p}=W^{2m(1-1/p)}_p(\Omega;E).
\end{equation*}
As in Section 2 we denote the norm in $X_j$ by $|\cdot|_j$ and open
balls in $X_j$ by $B_{X_j}(u,r)$, $j=0,1,\gamma$. Note that by the
Sobolev embedding theorem we have $X_\gamma\hookrightarrow
BC^{2m-1}(\overline{\Omega};E)$, which allows us to plug in
functions $u\in X_\gamma$ into the coefficients of $A$, into $f$ and
into the functions $b_j$ pointwise, without any growth restrictions
on these nonlinearities.

Assume we have a $C^1$-manifold of equilibria
$\Psi:U\ra X_1$ where $U\subset\R^k$ is an open neighborhood of $0$,
\begin{equation}
\label{Psi}
\begin{aligned}
A(\Psi(\zeta))\Psi(\zeta)&=F(\Psi(\zeta)) &\text{in }& \Omega, \; \zeta\in U,\\
B(\Psi(\zeta))&=0,&\text{on }&\partial \Omega, \; \zeta\in U,
\end{aligned}
\end{equation}
and set $u_*=\Psi(0)$. Assume that the rank of $\Psi^\prime(0)$ is
$k$ and that there are no other equilibria near $u_*$ in $X_1$,
i.e.\ $\cE\cap B_{X_1}(u_*,r_1)=\Psi(U)$, for some $r_1>0$, where as
in Section~2, $\cE$ denotes the set of equilibria of (\ref{eq:1}).

The linearization of (\ref{eq:1}) at $u_*$ is given by the operator $A_0$ defined as follows
\begin{equation}
\label{A*}
\begin{aligned}
&A_*v = A(u_*)v + (A^\prime(u_*)v)u_* - F^\prime(u_*)v&\quad\text{in } &\Omega, \\
&B_*v=B^\prime(u_*)v &\quad\text{on } &\partial\Omega, \\
&\mbox{with } v\in D(A_*)=D(B_*)=W^{2m}_p(\Omega;E),\\
&A_0=A_*|_{N(B_*)}.
\end{aligned}
\end{equation}
Next we consider the property of maximal $L_p$-regularity for the pair $(A_*,B_*)$,
and in particular for the operator $A_0$.
For this we only need to consider the
principal parts of the corresponding differential operator and of the boundary operators, i.e.
\begin{equation*}
\begin{aligned}
&A_\#(x,D)= \sum_{|\alpha|=2m} a_\alpha(x,u_*(x),\ldots,\nabla^{2m-1}u_*(x))D^\alpha, \\
&B_{j\#}(x,D)=\sum_{|\beta|=m_j} i^{m_j}[\partial
b_j/\partial(\partial_x^\beta
u)](x,\ldots,\nabla^{2m-1}u_*(x))D^\beta,
\end{aligned}
\end{equation*}
for $j=1,\ldots,m.$ Note that we use the notation $D=-i\nabla$, hence $\nabla^\beta=i^{|\beta|} D^\beta$.
It is shown in \cite{DHP06} that normal ellipticity of $A_\#$ and the Lopatinskii-Shapiro condition for
 $(A_\#,B_\#)$ are necessary, and in \cite{DHP03} that they are
also sufficient for $L_p$-maximal regularity of $A_0$. These conditions read as follows.
\vspace{1mm}
\begin{itemize}
\item[{\bf (E)}]
\, {\em For all $x\in\bar{\Omega}$, $\xi\in \R^n$, $|\xi|=1$,
$\sigma(\cA_{\#}(x,\xi))\subset \C_+,$\\
i.e.\ $\cA(x,D)$ is {\bf normally elliptic}.}
\vspace{2mm}
\item[{\bf (LS)}]
\, {\em For all $x\in \partial \Omega$,
$\xi\in\R^n$, with $\xi\cdot \nu(x)=0$, $\lambda\in \overline{\C_+}$,
$\lambda\neq 0$, and $h\in E^m$, the system of ordinary
differential equations on the half-line
\begin{equation*}
\begin{aligned}
\lambda v(y)+\cA_{\#}(x,\xi+i\nu(x)\partial_y)v(y)&=0, && y>0,\\
\cB_{j{\#}}(x,\xi+i\nu(x)\partial_y)v(0)&=h_j,&& j=1,\ldots,m,
\end{aligned}
\end{equation*}
admits a unique solution $v\in C_0(\R_+;E)$.}\\
({\bf Lopatinskii-Shapiro condition.})
\end{itemize}
\medskip
\noindent Now assume that $u_*\in X_1$ is an equilibrium of
(\ref{eq:1}), and let conditions (R), (E), and (LS) be satisfied.
It was shown in \cite{LPS06} that (\ref{eq:1}) then  admits a local
strong solution in the $L_p$-sense for each initial value $u_0\in
X_\gamma$, provided the compatibility condition $B(u_0)=0$ holds and
$|u_0-u_*|_\gamma$ is sufficiently small.
The solution map $[u_0\mapsto u(t,u_0)]$ defines a
local semi-flow in $X_\gamma$ near $u_*$ on the
nonlinear phase-manifold
$$\cM=\{u\in X_\gamma:\, B(u)=0 \mbox{ on }
\partial \Omega\}.$$
In case the equilibrium $u_*$ is {\em hyperbolic}, i.e.\
$\sigma(A_0)\cap i\R=\emptyset$, it was moreover shown in
\cite{LPS06} that it is isolated and that it has the so-called {\em
saddle point property}, which means that the local semi-flow in
$\cM$ admits a unique stable and unstable manifold near $u_*$. We
refer to \cite{LPS06} for details as well as to \cite{Pru03} in the
case of linear boundary conditions.

Returning to our situation,
differentiating (\ref{Psi}) w.r.t.\ $\zeta$ we obtain for $\zeta=0$
\begin{align*}
A(u_*)\Psi^\prime(0) + [A^\prime(u_*)\Psi^\prime(0)]u_* -F^\prime(u_*)\Psi^\prime(0)&=0
& \text{in } &\Omega,\\
B_j^\prime(u_*)\Psi^\prime(0)&=0
&\text{on }&\partial \Omega,\; j=1,\ldots,m.
\end{align*}
This shows that the image of $\Psi^\prime(0)$ is contained in the
kernel $N(A_0)$ of $A_0$, and also that $T_{u_\ast}(\cE)$, the
tangential space of $\cE$ at $u_\ast$, is contained in $N(A_0)$. As
in Section~2 we assume now that $R(\Psi^\prime(0))=N(A_0)$, that the
eigenvalue $0$ of $A_0$ is semi-simple, and that the remaining
spectrum of $A_0$ is contained in the open right half-plane $\C_+$.
Note that by boundedness of $\Omega$ and compact embedding, the
spectrum of $A_0$ consists only of isolated eigenvalues of finite
algebraic multiplicity, anyway.  We can now state the main result of
this section.
\goodbreak
\begin{theorem}
\label{th:2} Let $2m+n<p<\infty$, let $\Omega\subset\R^n$ be an open
bounded domain with boundary of class $C^{2m}$, and let the spaces
$X_j$, $j=0,1,\gamma$, be defined as above. Suppose $u_*\in X_1$ is
an equilibrium of (\ref{eq:1}), and assume that  conditions (R),
(E), and (LS) are satisfied. Let $A_0$ defined in \eqref{A*} denote
the linearization of (\ref{eq:1}) at $u_*$, and suppose that $u_*$
is normally stable, i.e.\ assume that
\begin{itemize}
\item[(i)]
\, near $u_*$ the set of equilibria $\cE$ is a $C^1$-manifold in $X_1$ of dimension $k\in\N$,
\item[(ii)]
\, the tangent space for $\cE$ at $u_*$ is given by $N(A_0)$,
\item[(iii)]
\, $0$ is a semi-simple eigenvalue of $A_0$, i.e.\ $R(A_0)\oplus N(A_0)=X_0$,
\item[(iv)]
\, $\sigma(A_0)\setminus\{0\}\subset \C_+=\{z\in\C:\, {\rm Re}\, z>0\}$.
\end{itemize}
\noindent Then $u_*$ is stable in $X_\gamma$, and there exists
$\delta>0$ such that the unique solution $u(t)$ of (\ref{eq:1}) with
initial value $u_0\in X_\gamma$, satisfying
$|u_0-u_*|_{\gamma}<\delta$ and the compatibility condition
$B(u_0)=0$ on $\partial \Omega$,
 exists on $\R_+$  and converges exponentially fast in
 $X_\gamma$ to some $u_\infty\in\cE$ as $t\rightarrow\infty$.
\end{theorem}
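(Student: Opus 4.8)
The plan is to reduce Theorem~\ref{th:2} to Theorem~\ref{th:1} by reformulating the boundary-value problem \eqref{eq:1} as an abstract quasilinear evolution equation of the form \eqref{u-equation} on a suitable phase space, after which conditions (i)--(iv) transfer verbatim and the conclusion follows. The key structural point is that although the boundary operators $B_j$ are nonlinear, they are \emph{lower order} (each $m_j<2m$), so the linearized boundary conditions $B_*v=B^\prime(u_*)v$ define a fixed closed subspace, and the nonlinear part of the boundary conditions can be absorbed into a nonlinearity acting on the interior equation. Concretely, I would introduce the nonlinear phase-manifold $\cM=\{u\in X_\gamma:B(u)=0\text{ on }\partial\Omega\}$ already mentioned in the text, and near $u_*$ use a change of variables (a Nemytskii-type substitution, or a reduction of $B(u)=0$ to $B_*(u-u_*)=b(u-u_*)$ with $b$ of higher order in the trace sense) so that the problem becomes an equation on the linear space $N(B_*)\cap X_1$ with a modified right-hand side.

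The steps, in order, would be: (1) Set up the operator $A_0=A_*|_{N(B_*)}$ as in \eqref{A*} and record that conditions (R), (E), (LS) guarantee, by \cite{DHP03,DHP06}, that $A_0$ has the property of maximal $L_p$-regularity on $X_0=L_p(\Omega;E)$, and that by boundedness of $\Omega$ and compactness of the resolvent, $\sigma(A_0)$ consists of isolated eigenvalues of finite algebraic multiplicity, so that assumption (iii) indeed gives a spectral decomposition $X_0=N(A_0)\oplus R(A_0)$ reducing $A_0$. (2) Linearize the boundary conditions: writing $v=u-u_*$, the condition $B(u)=0$ becomes $B_*v=H(v)$ where $H(v):=B_*v-B(u_*+v)$ satisfies $H(0)=0$, $H^\prime(0)=0$, and by (R) maps into a higher-regularity trace space; using a bounded right-inverse of $B_*$ (available since the Lopatinskii--Shapiro condition holds), define a local $C^1$-diffeomorphism $u\mapsto \tilde u$ near $u_*$ that straightens the phase-manifold $\cM$ onto the affine space $u_*+N(B_*)$, thereby converting \eqref{eq:1} into an abstract problem $\dot{\tilde v}+\tilde A(\tilde v)\tilde v=\tilde F(\tilde v)$ with $(\tilde A,\tilde F)\in C^1$ and with linearization at $0$ still equal to $A_0$. (3) Verify that this transformation carries the manifold of equilibria $\Psi(U)$, the tangent-space condition (ii), the semisimplicity (iii), and the spectral gap (iv) over unchanged (the diffeomorphism is the identity to first order at $u_*$, so it does not affect $A_0$ or the tangent space $N(A_0)$). (4) Apply Theorem~\ref{th:1} to the transformed problem to obtain stability in $X_\gamma$, global existence for $|u_0-u_*|_\gamma<\delta$, and exponential convergence to an equilibrium of the transformed equation; (5) pull back through the diffeomorphism to conclude the same for the original solution, noting that equilibria correspond to equilibria and that the compatibility condition $B(u_0)=0$ is precisely the statement that $u_0$ lies on $\cM$.

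The main obstacle will be Step (2): making the reduction of the nonlinear boundary conditions precise while staying within the $C^1$-category and preserving maximal $L_p$-regularity. One must check that $H(v)=B_*v-B(u_*+v)$ takes values in the correct trace space with the right temporal regularity (i.e.\ in the space of admissible boundary data for the $L_p$-maximal-regularity problem for $(A_*,B_*)$), that it is $C^1$ there with vanishing derivative at $0$, and that the right-inverse $R_{B_*}$ of the linearized boundary operator maps this data space boundedly into $\E_1(a)$ with constants uniform in $a$ — this is where the Lopatinskii--Shapiro condition and the results of \cite{LPS06,DHP03} do the real work. An alternative, perhaps cleaner route that sidesteps an explicit straightening is to repeat the proof of Theorem~\ref{th:1} directly on $\cM$: write the variation-of-constants formula for $v$ using the $(A_*,B_*)$-solution operator on the finite interval, which by \cite{LPS06} already enjoys maximal $L_p$-regularity with the boundary term $B(u_0+v)$ treated as an inhomogeneity of higher order, then run the same fixed-point/bootstrap estimates (e)--(f) verbatim. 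I would present the first route as the conceptual statement and indicate that the estimates are literally those of the proof of Theorem~\ref{th:1} once the boundary nonlinearity is absorbed, referring to \cite{LPS06} for the maximal-regularity input on the nonlinear phase-manifold.
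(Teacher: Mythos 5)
Your primary route --- straightening the phase manifold $\cM=\{u\st B(u)=0\}$ onto $u_*+N(B_*)$ by a diffeomorphism built from a right inverse of $B_*$, so as to invoke Theorem~\ref{th:1} verbatim --- has a genuine gap. Write the proposed change of variables as $\tilde u=u-e(u)$ with $e(u)=R_{B_*}H(u-u_*)$. To obtain an evolution equation for $\tilde u$ of the abstract form \eqref{u-equation} you must compute $\partial_t\tilde u=(I-e'(u))\dot u$, and $e'(u)$ involves the boundary traces of derivatives of its argument up to order $m_j\le 2m-1$. But along a strong $L_p$-solution one only has $\dot u(t)\in X_0=L_p(\Omega;E)$ for a.e.\ $t$, so $e'(u(t))\dot u(t)$ is not defined; equivalently, the boundary nonlinearity cannot be ``absorbed into a nonlinearity acting on the interior equation'' as a $C^1$ map $X_\gamma\to X_0$, because the admissible boundary data for the $(A_*,B_*)$ maximal-regularity problem live in the anisotropic trace spaces $\F_j(a)=W^{\kappa_j}_p((0,a);Y_0)\cap L_p((0,a);Y_j)$, i.e.\ they carry fractional \emph{time} regularity that is not visible pointwise in $t$. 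This is precisely the obstruction you flag as ``the main obstacle in Step (2)'', but it is not a technicality to be checked --- it is the reason the reduction to Theorem~\ref{th:1} by a state-space diffeomorphism does not go through as stated.

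The alternative you sketch in your last paragraph is, in substance, the paper's actual proof, and it is the route you should carry out. The paper centers at $u_*$, constructs $\phi$ exactly as in Section~2, and passes to a normal form in the variables $v=P^c\bar u$, $w=P^s\bar u-\phi(P^c\bar u)$ in which the boundary nonlinearity survives as an explicit inhomogeneity $B_*w=S(v,w)$ with $S(v,0)=0$; note that the $w$-equation carries the operator $P^sA_*P^sw$ rather than $A_s$ (the projections do not commute with $A_*$, which also produces a non-small but harmless extra term $-P^cA_*w$ in $T$). The analytic inputs are then: (1) maximal $L_p$-regularity for the fully inhomogeneous problem $(A_*,B_*)$ on finite intervals (from \cite{DHP06}); (2) a half-line version for $P^sA_*P^s$ with constants independent of the interval length, obtained by a splitting argument; and (3) the smallness estimate $\no e^{\omega t}S(v,w)\no_{\F(a)}\le\eta\no e^{\omega t}w\no_{\EE_1(a)}$, which comes from \cite[Proposition 3.3]{LPS06} and is exactly the point where the Lopatinskii--Shapiro condition and the structure of the trace spaces do the work. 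With these in hand, parts (c)--(f) of the proof of Theorem~\ref{th:1} run unchanged. So: drop the straightening, promote your ``alternative'' to the main argument, and supply items (1)--(3) explicitly.
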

\begin{proof}
\,(a)\, The proof is  similar to that of Theorem~\ref{th:1}.
It is based again on the reduction to normal form.
We use the notation introduced above and denote as in Section 2 by $P^s$
and $P^c$ the projections onto $X^s_0=R(A_0)$
resp.\ $X^c =N(A_0)$.
We first center (\ref{eq:1}) around $u_*$ by setting $\bar{u}=u-u_*$, and obtain the
following problem for $\bar{u}$.
\begin{equation}
\label{eq:2}
\left\{
\begin{aligned}
\partial_t \bar{u}+A_*\bar{u}&=G(\bar{u}) &\text{in } &\Omega, \\
B_*\bar{u}&=H(\bar{u})&\text{on } &\partial \Omega,\\
\bar{u}(0)=\bar{u}_0:&=u_0-u_* &\text{in } &\Omega.
\end{aligned}
\right.
\end{equation}
Here $G$ is defined as in Section 2, and
\begin{equation*}
H(\bar{u})=B_*\bar{u}-B(u_*+\bar{u})= -[B(u_*+\bar{u})-B(u_*)-B^\prime(u_*)\bar{u}].
\end{equation*}
Exactly as in the proof of Theorem~\ref{th:1} we obtain
a function
$\phi\in C^{1}(B_{X^c}(0,\rho_0),X^s_1)$
with $\phi(0)=\phi^\prime(0)=0$
such that the equilibrium equation
\begin{equation*}
\begin{aligned}
A_*\psi(\zeta)&=G(\psi(\zeta))&\text{in } &\Omega,\; \zeta\in U, \\
B_*\psi(\zeta)&=H(\psi(\zeta))&\text{on } &\partial \Omega,\; \zeta\in U, \\
\end{aligned}
\end{equation*}
 for \eqref{eq:2}
can equivalently be expressed by
\begin{equation}
\label{equilibrium-phi-2}
\begin{aligned}
 P^c A_*\phi(v)&=P^c G(v+\phi(v)), \\
P^sA_*\phi(v) &= P^sG(v+\phi(v)), \quad B_*\phi(v)=H(v+\phi(v)),\\
\end{aligned}
\end{equation}
for every $v\in B_{X^c}(0,\rho_0)$.
%
%
We can now introduce the normal form of (\ref{eq:1})
for the variables
$$
v:=P^c (u-u_*)=P^c \bar{u},\quad
w:=P^s(u-u_*)-\phi(P^c (u-u_*))=P^s\bar{u}-\phi(P^c \bar{u}),
$$
which reads as
\begin{equation}
\label{normalform}
\left\{
\begin{aligned}
\partial_t v &= T(v,w) &\text{in } &\Omega,\\
\partial_t w + P^sA_\ast P^sw&=R(v,w)&\text{in }&\Omega,\\
B_*w&=S(v,w)&\text{on }&\partial \Omega, \\
v(0)&=v_0,\ w(0)=w_0 &\text{in } &\Omega.
\end{aligned}
\right.
\end{equation}
Using the equilibrium equations in \eqref{equilibrium-phi-2} we can
derive, similarly as in Section 2, the following expressions for
$T$, $R$ and $S$:
\begin{equation*}
\label{RTS}
\begin{aligned}
 &&T(v,w)&=P^c\big(G(v+\phi(v)+w)-G(v+\phi(v))\big)-P^cA_\ast w, \\
 &&R(v,w)&=P^s\big(G(v+\phi(v)+w)-G(v+\phi(v))\big)-\phi^\prime(v)T(v,w),\\
  &&S(v,w)&=H(v+\phi(v)+w)-B_*\phi(v)\\
 &&      &=H(v+\phi(v)+w)-H(v+\phi(v)).
\end{aligned}
\end{equation*}
Clearly,
\begin{equation*}
\label{equilibrium-nonlinear}
R(v,0)=T(v,0)=S(v,0)=0,\quad v\in B_{X^c}(0,\rho_0).
\end{equation*}
Therefore we are in the same situation as in Section 2, except
that here the infinite dimensional part, i.e.\ the equation for
$w$, has a nonlinear boundary condition in case $S(v,w)\not\equiv0$.
\medskip\\
(b)\, Let $0<a\le \infty$ and define the following function spaces
on $(0,a)\times\Omega$:
$$ \E_1(a)=H^1_p((0,a);X_0)\cap L_p((0,a);X_1),
\quad \E_0(a)=L_p((0,a);X_0).$$
We also need spaces for the boundary values. For this purpose, we set with $\kappa_j=1-m_j/2m-1/2mp$
$$Y_0=L_p(\partial\Omega;\C^N),\quad Y_j=W^{2m\kappa_j}_p(\partial\Omega;\C^N),$$
and
$$\F(a)=\prod_{j=1}^m \F_j(a),\quad \F_j(a)=W^{\kappa_j}_p((0,a);Y_0)\cap L_p((0,a);Y_j),\quad j=1,\ldots, m .$$
Note that by trace theory we have
\begin{equation*}
\begin{split}
&\F_j(a)\hookrightarrow BC([0,a]; W^{2m\kappa_j-2m/p}_p(\partial\Omega;\C^N), \\
&W^{2m\kappa_j-2m/p}_p(\partial\Omega;\C^N))\hookrightarrow BC(\partial\Omega;\C^N)
\end{split}
\end{equation*}
by the condition $p>2m+n$ and since $m_j<2m$.
The spaces $\F_j(a)$ are the trace spaces on the lateral boundary $(0,a)\times\partial\Omega$ of $(0,a)\times\Omega$ for the
derivatives $D^\beta u$ of order $|\beta|=m_j$ for $u\in \E_1(a)$.
\medskip\\
\noindent
The basic solvability theorem for the fully inhomogeneous linear problem
\begin{equation}
\label{linproblem}
\left\{
\begin{aligned}
\partial_t u+A_*u &=f(t) &\text{in } &\Omega,&t>0,\\
B_*u &=g(t)&\text{on } &\partial\Omega,&t>0,\\
u(0)&=u_0 &\text{in }&\Omega
\end{aligned}
\right.
\end{equation}
in the $L_p$-setting reads as follows, see \cite{DHP06}.
\begin{proposition}
\label{DHP} Let $a<\infty$. The linear problem (\ref{linproblem})
admits a unique solution $u\in\E_1(a)$ if and only if $f\in
\E_0(a)$, $g\in \F(a)$, $u_0\in X_\gamma$, and the compatibility
condition $B_*u_0= g(0)$ holds. There is a constant $C=C(a)>0$ such
that the estimate
$$\no u\no _{\EE_1(a)}
\le C\big(|u_0|_\gamma+ \no f\no_{\EE_0(a)}+\no g\no_{\F(a)}\big)$$
holds for the solution $u$ of (\ref{linproblem}).
\end{proposition}
\noindent
We shall also need a variant of Proposition \ref{DHP} for the problem
\begin{equation}
\label{linprobl}
\left\{
\begin{aligned}
\partial_t w+P^sA_\ast P^sw &=f(t) &\text{in } &\Omega,&t>0,\\
B_*w &=g(t)&\text{on } &\partial\Omega,&t>0,\\
w(0)&=w_0 &\text{in }&\Omega
\end{aligned}
\right.
\end{equation}
on the half-line, where we assume  $w_0\in X^s_\gamma$ and
$f\in L_p(\R_+;X^s_0)$.
For this purpose we proceed as follows.
Suppose first that $u$ solves (\ref{linproblem}) with $u_0=w_0$.
Since $A_\ast P^c u=B_\ast P^cu=0$
we then conclude that $w=P^su$ solves problem \eqref{linprobl}.
Let
$u_1$ denote the solution of (\ref{linproblem}) with $A_*$
replaced by $A_*+1$. The spectrum of $A_0+1$ is contained in
$\C_+$, hence we may apply Proposition 3.1 of \cite{LPS06} to
obtain a uniform estimate  for $u_1$ in $\E_1(\infty)$. Then
$u_2=u-u_1$ solves the problem
$$\partial_tu_2+A_*u_2=u_1,\quad B_*u_2=0,\quad u_2(0)=0.$$
As $\sigma(A_s)\subset\C_+$, $A_s$ has maximal $L_p$-regularity on
the half-line, hence we obtain also a uniform estimate for
$P^su_2$ in $\E_1(\infty)$. These arguments yield
the following result.
\goodbreak
\begin{proposition}
\label{LP} Let $a\le \infty$. The linear problem (\ref{linprobl}) admits a unique solution
$w\in\E_1(a)\cap L_p((0,a);X^s_0)$ if and only if
$f\in L_p((0,a);X^s_0  )$, $g\in \F(a)$,
$w_0\in X^s_\gamma $, and the compatibility condition $B_*w_0= g(0)$ holds.
There is a constant $C_0>0$, independent of $a$, such that the estimate
\begin{equation*}
\no w\no_{\EE_1(a)}
\le C_0\big(|w_0|_\gamma+ \no f\no_{\EE_0(a)}+\no g\no_{\F(a)}\big)
\end{equation*}
holds for the solution $w$ of (\ref{linprobl}),
for all functions $f\in L_p((0,a);X^s_0)$, $g\in\F(a)$
and all initial values $w_0\in X^s_\gamma$.
\end{proposition}
\noindent
Proposition \ref{LP} remains valid when we
replace $w(t)$ by $e^{\sigma t}w(t)$, $f(t)$  by $e^{\sigma t}f(t)$ and
$g(t)$ by $e^{\sigma t}g(t)$ where $0<\sigma\le \omega$,
$\omega<\inf\{{\rm Re}\,\lambda:\lambda\in\sigma(A_s)\}$.
\medskip\\
(c)\, Next we consider the nonlinearities $R$, $T$, and $S$.
Since by assumption the functions $a_\alpha$ and $f$ are
in $C^1$ and $p>n+2m$,
it follows easily
via the embedding $X_\gamma\hookrightarrow BC(\bar{\Omega})$ that $A$ and $F$
are as in Section 2. Hence we obtain as there the estimates
\begin{equation*}
|T(v,w)|_0
\le C_2(\eta+r)|w|_1 + C_3|w|_1,
\quad v\in B_{X^c}(0,\rho),\; w\in B_{X^s_\gamma}(0,\rho)\cap X_1,
\end{equation*}
where $r=3\rho$ and
$C_3:=\no P^c A_\ast P^c\no_{\cB(X_1,X^c)}$.
Since $\phi^\prime(0)=0$
we can assume that $\rho_0$ was chosen so small that
$|\phi^\prime(w )|_{\cB(X^c,X_1^s)}\le \eta$ for
all $w\in B_{X^c}(0,\rho_0)$. With  this we obtain
\begin{equation*}
|R(v,w)|_0\le  C_4(\eta+r)|w|_1,
\quad v\in \bar B_{X^c}(0,\rho),\; w\in \bar B_{X^s_\gamma}(0,\rho)\cap X_1.
\end{equation*}
Observe that in contrast to the previous section the constant $C_3$ is no longer small since $P^l$, $l\in\{c,s\}$, and $A_*$ do not commute.
However, this does not alter our conclusions.
It is more involved to derive the estimates on $S$ needed
for Proposition~\ref{LP}. Fortunately, we can refer to
\cite[Proposition 3.3]{LPS06}.
This result implies
$$\no e^{\omega t}(H(\bar{u}_1)-H(\bar{u}_2))\no_{\F(a)}
\le \eta \no e^{\omega t}(\bar{u}_1-\bar{u}_2)\no_{\EE_1(a)},$$
 for all
$e^{\omega t}\bar{u}_1,e^{\omega t}\bar{u}_2\in \E_1(a)$ such that
$|\bar{u}_1(t)|_\gamma, |\bar{u}_2(t)|_\gamma\le r, \; t\in [0,a].$
Therefore, by possibly decreasing  $r>0$,
with $\bar{u}_1=v+\phi(v)+w$ and $\bar{u}_2=v+\phi(v)$ this yields
$$
\no e^{\omega t} S(v,w)\no_{\F(a)}\le \eta \no e^{\omega t} w\no_{\EE_1(a)},
$$
for all $e^{\omega t} v,e^{\omega t} w\in \E_1(a)$ such that
$v([0,a])\subset B_{X^c}(0,\rho)$ and $w([0,a])\subset
B_{X^s_\gamma}(0,\rho)$. These are the estimates we need for
applying Proposition \ref{LP}.
\medskip\\
\noindent
(d)\, We may now follow part (c)-(f) of the proof of
Theorem~\ref{th:1} to complete the proof of Theorem~\ref{th:2},
the needed local well-posedness result
being Proposition 4.1 in \cite{LPS06}.
\end{proof}
\section{Convergence of solutions for the Mullins-Sekerka problem}
We consider the two-phase
quasi-stationary Stefan problem with surface tension, which has
also been termed Mullins-Sekerka model (or Hele-Shaw model with
surface tension) and is a model for phase transitions in
liquid-solid systems. Let $\Omega$ be a bounded domain in $\R^n$,
$n\ge 2$, with smooth boundary $\partial\Omega$. Let
$\Gamma_0\subset\Omega$ be a compact connected hypersurface
in $\Omega$
which is the boundary of an open set $\Omega_0\subset\Omega$,
and let
$\Gamma(t)$ be its position at time $t\ge 0$. Denote by $V(t,\cdot)$
and $\kappa(t,\cdot)$ the normal velocity and the mean curvature
of $\Gamma(t)$, and let $\Omega_1(t)$ (liquid phase) and
$\Omega_2(t)$ (solid phase) be the two regions in $\Omega$
separated by $\Gamma(t)$, with $\Omega_1(t)$ being the interior
region. Let further $\nu(t,\cdot)$ be the outer unit normal field
on $\Gamma(t)$ with respect to $\Omega_1(t)$. We shall use the
convention that $V$ is positive if $\Omega_1(t)$ is expanding, and
that the mean curvature is positive for uniformly convex
hypersurfaces. The {\em two-phase Mullins-Sekerka problem}
consists in finding a family $\Gamma(t)$, $t\ge 0$, of
hypersurfaces satisfying
\begin{equation} \label{MSP}
V =[\partial_\nu u_\kappa],\;\;t>0,\quad \Gamma(0)=\Gamma_0,
\end{equation}
where $u_\kappa=u_\kappa(t,\cdot)$ is, for each $t\ge 0$, the
solution of the elliptic boundary value problem
\begin{equation}
\label{Ex3}
\left\{
\begin{aligned}
\Delta u \,&= 0\quad \mbox{in}\;\Omega_1(t)\cup \Omega_2(t), \\
u \,&= \kappa \quad \mbox{on}\;\Gamma(t),\\
\partial_\nu u \,&= 0 \quad \mbox{on}\;\partial\Omega.
\end{aligned}
\right.
\end{equation}
Here $[\partial_\nu u_\kappa]:=\partial_\nu u_\kappa^2-\partial_\nu
u_\kappa^1$ stands for the jump of the normal derivative of
$u_\kappa$ across the interface $\Gamma(t)$, and $\partial_\nu u$
denotes the normal derivative of $u$ on $\partial \Omega$.

Assuming connected phases and that the interface does not touch
the fixed boundary $\partial\Omega$, the set of equilibrium states
of (\ref{MSP}), (\ref{Ex3}) consists precisely of all spheres
$S_R(x_0)\subset \Omega$, where $R$ denotes the radius and $x_0$
the center. Thus there is an $(n+1)$-parameter family of
equilibria, the parameters being the $n$ coordinates of the center
$x_0$ and the radius $R$.

Let now $\Sigma\subset\Omega$ be some fixed sphere without
boundary contact. We are interested in the asymptotic properties
of solutions of the Mullins-Sekerka problem that start in a
neighbourhood of $\Sigma$, that is $\Gamma_0$ is close to
$\Sigma$. Following \cite{EsSi98} we first use Hanzawa's method to
transform the original problem to a system of equations on a fixed
domain. Here the basic idea is to represent the moving interface
$\Gamma(t)$ as the graph of a function in normal direction of a
fixed reference surface, which will be $\Sigma$ in our case.
Denoting the parameterizing function by $\rho(t,\cdot)$ this leads
to a problem on $\Sigma$ of the form
\begin{equation} \label{TMS}
\dot{\rho}+B(\rho)S(\rho)=0,\;\;t>0,\quad \rho(0)=\rho_0,
\end{equation}
where $S(\rho)$ is the solution of the transformed elliptic boundary
value problem
\begin{equation}
\label{TMS2}
\left\{
\begin{aligned}
A(\rho) v \,&= 0 &\mbox{in}&\;\Omega_1\cup \Omega_2, \\
v \,&= K(\rho)  &\mbox{on}&\;\Sigma,\\
\partial_\nu v \,&= 0 &\mbox{on}&\;\partial\Omega.
\end{aligned}
\right.
\end{equation}
Here $\Omega_1$ and $\Omega_2$ are the two regions in $\Omega$
separated by $\Sigma$, with $\Omega_1$ being enclosed by $\Sigma$.
By construction, the solution $\Gamma(\cdot)\equiv \Sigma$ of the
original problem (\ref{MSP}) corresponds to the solution $\rho\equiv
0$ of (\ref{TMS}). The operator $F(\cdot):=B(\cdot)S(\cdot)$ in
(\ref{TMS}) is a nonlocal pseudo-differential operator of third
order and renders (\ref{TMS}) a quasilinear parabolic problem, see
\cite{EsSi98} for its precise definition and more details.

We want to study (\ref{TMS}) in an $L_p$ setting.
Let $p>n+2$
and define
\begin{equation*}
X_0=W^{1-1/p}_p(\Sigma), \quad X_1=W^{4-1/p}_p(\Sigma).
\end{equation*}
Given $J=(0,a)$, $a>0$, we view
(\ref{TMS}) as an evolution equation in the space $\EE_0(J)=L_p(J;X_0)$,
that is we are interested in solutions of (\ref{TMS}) in the class
$\EE_1(J)=H^1_p(J;X_0)\cap L_p(J;X_1)$.
For the corresponding trace space we have
$$
X_\gamma:=(X_0,X_1)_{1-1/p,p}=W^{4-4/p}_p(\Sigma).
$$
Note that, by Sobolev embedding, we
have $X_1\hookrightarrow X_\gamma\hookrightarrow C^2(\Sigma)$.

Let
\[
\mathfrak{A}:=\{\rho\in C^2(\Sigma): |\rho|_{C(\Sigma)}<\eta\}
\]
with $\eta>0$ sufficiently small denote the set of admissible
parameterizations. Setting $U:=X_1\cap \mathfrak{A}$, one has
$F\in C^\infty(U;X_0)$ and the linearization $L:=F'(0)$ is given
by
\[
L\rho=-[\partial_\nu TA_\Sigma \rho],
\]
where $Tg$ denotes the solution of the elliptic problem
\begin{equation}
\label{TMS3}
\left\{
\begin{aligned}
\Delta v \,&= 0\quad \mbox{in}\;\Omega_1\cup \Omega_2, \\
v \,&= g \quad \mbox{on}\;\Sigma,\\
\partial_\nu v \,&= 0 \quad \mbox{on}\;\partial\Omega,
\end{aligned}
\right.
\end{equation}
and
\[
A_\Sigma=-\frac{1}{n-1}\Big(\frac{n-1}{R^2}+\Delta_\Sigma\Big),
\]
with $R$ being the radius of $\Sigma$ and $\Delta_\Sigma$ the
Laplace-Beltrami operator on $\Sigma$. The operator $A_\Sigma$ is
the linearization $K'(0)$ of the transformed mean curvature operator
$K(\rho)$ at $\rho=0$. Concerning the linearization $L=F'(0)$ we
refer to \cite{EsSi98,PrSi06}.

One can show (cf. \cite[Theorem 2.1]{PrSi06}) that the spectrum of
$L$ consists of countably many real nonnegative eigenvalues of
finite algebraic multiplicity, and that $0$ is a semi-simple
eigenvalue of $L$ with multiplicity $n+1$, see also
\cite[Proposition 5.4 and Lemma 6.1]{EsSi98}. Moreover, the kernel
of $L$ is given by $N(L)=\mbox{span}\,\{Y_0,Y_1,\ldots,Y_n\}$, where
$Y_0\equiv 1$, and where $Y_j$, $1\le j\le n$, are the spherical
harmonics of degree 1. We may assume that $Y_j=R^{-1}p_j|_\Sigma$,
$1\le j\le n$, with $p_j$ being the harmonic polynomial of degree
$1$ given by $p_j(x)=x_j$ for $x\in \R^n$; by $p_j|_\Sigma$ we mean
the restriction of $p_j$ to $\Sigma$.

\medskip

Let us assume that $\Sigma$ is centered at the origin of $\R^n$.
Suppose ${\mathcal S}\subset \Omega$ is a sphere that is
sufficiently close to $\Sigma$. Denote by $(z_1,\ldots,z_n)$ the
coordinates of its center and let $z_0$ be such that $R+z_0$
corresponds to its radius. Then, by \cite[Section 6]{EsSi98}, the
sphere ${\mathcal S}$ can be parameterized over $\Sigma$ by the
distance function
\[
\rho(z)=\sum_{j=1}^n z_j Y_j-R+\sqrt{(\sum_{j=1}^n z_j
Y_j)^2+(R+z_0)^2-\sum_{j=1}^n z_j^2}.
\]
Denoting by $O$ a sufficiently small neighbourhood of $0$ in
$\R^{n+1}$, the mapping
$[z\mapsto \rho(z)]:O\to W^{4-1/p}_p(\Sigma)$
is smooth and the derivative at $0$ is given by
\[
\rho'(0)h=\sum_{j=0}^n h_j Y_j,\quad h\in \R^{n+1}.
\]
So we see that near $\Sigma$ the set $\cE$ of equilibria of
(\ref{TMS}) is a smooth manifold in $X_1$ of dimension $n+1$, and
that the tangent space $T_\Sigma(\cE)$ coincides with $N(L)$.

\medskip

In order to be able to apply Theorem~\ref{th:1} from Section 2 it remains
to verify that the operator $L$ has the property of maximal
$L_p$-regularity. This means we have to show that for any
$J=(0,a)$, $a>0$, and any $g\in \EE_0(J)$ the problem
\begin{equation} \label{maxregL}
\dot{\rho}+L\rho=g,\;\;t\in J,\quad \rho(0)=0,
\end{equation}
has a unique solution in the space $\EE_1(J)$. By means of the
standard localization method, perturbation arguments, and by
solving certain elliptic auxiliary problems, (\ref{maxregL}) can
be reduced to the following two-phase problem on $\R^n\times
\dot{\R}$ with $\dot{\R}=\R\setminus\{0\}$:
\begin{align}
-\Delta_x w-\partial_y^2 w & = 0, \quad t\in J,\,x\in \R^n,\,y\in
\dot{\R},\label{MSB1}\\
w|_{y=0}+\Delta_x \sigma & = 0,\quad t\in J,\,x\in \R^n,\label{MSB2}\\
\partial_t \sigma-[\partial_y w] & = h,\quad t\in J,\,x\in \R^n,\label{MSB3}\\
\sigma(0) & = 0,\quad x\in \R^n.\nonumber
\end{align}
Here $[\partial_y w]=\partial_y w|_{y=0^+}-\partial_y w|_{y=0^-}$,
and $h\in L_p(J;W^{1-1/p}_p(\R^n))$ is a given function. We take
the Fourier transform w.r.t. $x$ and denote the transformed
functions by $\tilde{w}$ and $\tilde{\sigma}$. Then (\ref{MSB1})
and (\ref{MSB2}) imply that
$\tilde{w}=e^{-|\xi|\,|y|}|\xi|^2\tilde{\sigma}$. Inserting this into
(\ref{MSB3}) leads to the subsequent problem for $\tilde{\sigma}$
on $\R^n$:
\[
\partial_t \tilde{\sigma}+2|\xi|^3
\tilde{\sigma}=\tilde{h},\;\;t\in J,\quad \tilde{\sigma}(0)=0.
\]
Set $Y_0=W^{1-1/p}_p(\R^n)$ and $Y_1=W^{4-1/p}_p(\R^n)$ and let
$G$ be defined by
$G=d/dt$ with domain $D(G)={}_0H^1_p(J;Y_0)$, here the zero means
vanishing trace at $t=0$. Then $G$ is sectorial, invertible and
admits an $\mathcal H^\infty$-calculus in $L_p(J;Y_0)$ of angle
$\pi/2$. Let further $D$ be the operator in $L_p(J;Y_0)$ with
symbol $2|\xi|^3$ and domain $D(D)=L_p(J;Y_1)$. Then $D$ is
sectorial and admits an $\mathcal H^\infty$-calculus in
$L_p(J;Y_0)$ of angle $0$. Thus by the Dore-Venni theorem, the
equation $G\sigma+D\sigma=h$ possesses a unique solution
$\sigma\in D(G)\cap D(D)$. Hence $L$ has the property of maximal
$L_p$-regularity.

So all assumptions of Theorem~\ref{th:1} are satisfied, hence we obtain the
following result, which is the main result in \cite{EsSi98} except
for the different functional analytic setting.
\begin{theorem}
\label{th:5}
Let $p>n+2$ and $\Omega\subset
\R^n$ be a bounded domain with boundary of class $C^2$. Suppose
$\Sigma$ is an arbitrary sphere in $\Omega$ of radius $R$ without
boundary contact. Then $\rho\equiv 0$ is a stable equilibrium of
(\ref{TMS}) in $X_\gamma=W^{4-4/p}_p(\Sigma)$, and there exists
$\delta>0$ such that if $|\rho_0|_{\gamma}<\delta$, then the
corresponding solution of (\ref{TMS}) exists globally and
converges at an exponential rate in $X_\gamma$ to
some equilibrium $\rho_\infty$ as $t\to\infty$.
In this sense, the sphere $\Sigma$ is a stable
equilibrium of the Mullins-Sekerka problem, and any solution
$\Gamma(\cdot)$ of (\ref{MSP}) that starts sufficiently close to
$\Sigma$ exists globally and converges to some sphere at
an exponential rate as
$t\to\infty$.
\end{theorem}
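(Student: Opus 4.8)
The plan is to apply \thmref{th:1} directly to the transformed Mullins--Sekerka evolution equation \eqref{TMS}, after verifying its four hypotheses; the conclusion about the original free boundary problem \eqref{MSP}--\eqref{Ex3} then follows by transporting the convergence statement back through Hanzawa's transformation. Concretely, I view \eqref{TMS} as a quasilinear parabolic problem of the abstract form \eqref{u-equation} with $A(\rho)\rho + F(\rho) := B(\rho)S(\rho)$ (splitting the third-order nonlocal operator into its quasilinear top-order part and lower-order remainder), acting in $X_0 = W^{1-1/p}_p(\Sigma)$ with $X_1 = W^{4-1/p}_p(\Sigma)$ and $X_\gamma = W^{4-4/p}_p(\Sigma)$; the required $C^1$-dependence \eqref{AF} holds because, as recalled in the excerpt, $F\in C^\infty(U;X_0)$ on $U = X_1\cap\mathfrak A$, and the embedding $X_\gamma\hookrightarrow C^2(\Sigma)$ lets us evaluate the nonlinearities pointwise.

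First I would identify the equilibrium $u_* = \rho\equiv 0$ (corresponding to $\Gamma\equiv\Sigma$) and record that the linearization is $A_0 = L = F'(0)$, given explicitly by $L\rho = -[\partial_\nu T A_\Sigma\rho]$. Hypotheses (i) and (ii) of \thmref{th:1} are exactly the spectral/geometric facts already assembled above: the equilibria near $\Sigma$ form a smooth $(n{+}1)$-dimensional manifold in $X_1$ (parameterized by center and radius via the distance function $\rho(z)$), with $\rho'(0)h = \sum_{j=0}^n h_j Y_j$, and this tangent space coincides with $N(L) = \mathrm{span}\{Y_0,\dots,Y_n\}$. Hypothesis (iii), semi-simplicity of the eigenvalue $0$, and (iv), $\sigma(L)\setminus\{0\}\subset\C_+$, are furnished by \cite[Theorem 2.1]{PrSi06} (or \cite{EsSi98}): the spectrum of $L$ consists of countably many real nonnegative eigenvalues of finite multiplicity and $0$ is semi-simple of multiplicity $n{+}1$. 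So the only genuine work is the maximal $L_p$-regularity of $L$, which the excerpt carries out by localization, perturbation, solving elliptic auxiliary problems, and reduction to the model two-phase problem \eqref{MSB1}--\eqref{MSB3}; after Fourier transform in $x$ this collapses to the scalar equation $\partial_t\tilde\sigma + 2|\xi|^3\tilde\sigma = \tilde h$, which is solved by the Dore--Venni theorem applied to the operator sum $G + D$ (with $G = d/dt$ sectorial invertible admitting an $\mathcal H^\infty$-calculus of angle $\pi/2$, and $D$ the Fourier multiplier with symbol $2|\xi|^3$ of angle $0$, so the angle condition $\pi/2 + 0 < \pi$ holds). With all four hypotheses checked, \thmref{th:1} yields that $\rho\equiv 0$ is stable in $X_\gamma$ and that small solutions exist globally and converge exponentially in $X_\gamma$ to some equilibrium $\rho_\infty$.

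The main obstacle is verifying maximal $L_p$-regularity of the nonlocal third-order operator $L$; everything else is either quoted spectral theory or a direct instantiation of the abstract machinery of Section~2. The delicate points in that step are the Dirichlet-to-Neumann structure hidden in $T$ (so that after localization the symbol is genuinely $2|\xi|^3$), and checking the angle hypothesis of Dore--Venni; the passage from the half-line estimate back to the finite-interval statement is routine. Finally, to obtain the geometric conclusion, I would invoke that Hanzawa's transformation is a diffeomorphism between a neighborhood of $\rho\equiv 0$ in $X_\gamma$ and a neighborhood of $\Sigma$ in the manifold of admissible hypersurfaces, carrying solutions of \eqref{TMS} to solutions of \eqref{MSP}--\eqref{Ex3} and equilibria (the $\rho(z)$ family) to spheres; hence the $X_\gamma$-convergence $\rho(t)\to\rho_\infty$ translates into convergence of $\Gamma(t)$ to the sphere parameterized by $\rho_\infty$, at the same exponential rate.
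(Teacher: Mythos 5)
Your proposal follows essentially the same route as the paper: verify hypotheses (i)--(iv) of Theorem~\ref{th:1} for the transformed equation \eqref{TMS} using the quoted spectral facts from \cite{PrSi06,EsSi98} and the parameterization of nearby spheres, establish maximal $L_p$-regularity of $L$ by localization to the model two-phase problem and the Dore--Venni theorem, and then transport the conclusion back through Hanzawa's transformation. The only (immaterial) quibble is the sign in your splitting $A(\rho)\rho+F(\rho):=B(\rho)S(\rho)$ versus the convention $\dot u+A(u)u=F(u)$ of \eqref{u-equation}.
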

\noindent 
This approach can also be used to show the stability of
spheres for the two-phase quasi-stationary Stokes flow in a
bounded domain, see \cite{GP97,FR02} for alternate approaches
in the one-phase case.
Moreover, it can be applied to models in
tumor growth, see \cite{Cui07}
for a discussion of existing work.

\section{Stability of travelling wave
solutions to a quasilinear parabolic equation}
The situation of the generalized principle of linearized stability
may occur when studying the stability of travelling wave solutions
of parabolic equations, see e.g. \cite[Section 5.4]{Hen81} for the
semilinear case. In what follows we want to consider a quasilinear
variant of the Huxley equation:
\begin{equation} \label{TW1}
u_t-(\sigma(u_x))_x=f(u),\quad t>0,\,x\in \R.
\end{equation}
Here $f(r)=r(1-r)(r-a)$, $r\in \R$, where $a\in (0,1/2)$ is a
constant, and $\sigma$ is a $C^2$ smooth function on $\R$ satisfying
\begin{equation}
\label{TW2} 0<c_1\le \sigma'(r)\le c_2,\quad r\in \R.
\end{equation}
A travelling wave $u(t,x)=w(x+Vt)$ with speed $V$ satisfies
\begin{equation}
\label{TW3}
(\sigma(w^\prime(s)))^\prime-Vw^\prime(s)+f(w(s))=0,\quad s\in\R.
\end{equation}
Similarly to the special case $\sigma(r)=r$ (cf. \cite{Hen81}), one
can show, by means of a phase plane analysis, that for some $V>0$
(\ref{TW3}) admits a solution $w$ with $w(s)\to 0$ as $s\to -\infty$
and $w(s)\to 1$ as $s\to \infty$. For this purpose we introduce the
variable $z:=w^\prime$. Then (\ref{TW3}) is equivalent to the system
\begin{equation*}
\left\{
\begin{aligned}
\dot{w} & = z,\nonumber\\
\dot{z} & = \frac{1}{\sigma'(z)}\,(Vz-f(w))\nonumber.
\end{aligned}
\right.
\end{equation*}
Denoting its right-hand side by $H(w,z)$, we find that
\[
H'(i,0)=\left[
\begin{array}{c@{\;\;}c}
0 & 1 \\
-\frac{f'(i)}{\sigma'(0)} & \frac{V}{\sigma'(0)} \\
\end{array}
\right],\quad i\in \{0,1\}.
\]
We have $f'(0)=-a$ and $f'(1)=a-1$, thus the equilibria $(0,0)$ and
$(1,0)$ are both saddle points. The eigenvalues of $H'(0,0)$ are
given by
\[
\lambda_{1,2}=\frac{1}{2\sigma'(0)}\Big(V\pm
\sqrt{V^2+4a\sigma'(0)}\Big),
\]
$(1/\lambda_1,1)$ is an eigenvector to $\lambda_1>0$, thus the
unstable manifold points into the first and third quadrant, with
steeper slopes for higher values of $V\ge 0$.

Define the functions
\[
F(y)=\int_0^y f(r)dr,\quad G(y)=\int_0^y \sigma'(r)r\,dr,\quad y\in
\R.
\]
Then $(\ref{TW3})$ implies that
\begin{equation} \label{TW5}
\frac{d}{ds}\Big(G(z(s))+F(w(s))\Big)=Vz(s)^2,\quad s\in \R.
\end{equation}
In particular $G(z)+F(w)$ is a first integral if $V=0$. By
(\ref{TW2}) we further have $C_1 y^2\le 2G(y)\le C_2 y^2$, $y\in\R$.

We now consider the trajectory $\gamma$ that (near the origin) lies
on the unstable manifold to $(0,0)$ in the first quadrant. For
$V=0$, $\gamma$ cannot reach the line $w=1$, since $F(w)\le 0$ on
$\gamma$, and $F(1)=\frac{1}{6}(\frac{1}{2}-a)>0$. In case $V>0$,
(\ref{TW5}) shows that $\gamma$, as long as it remains in the first
quadrant, moves through increasing values $c$ of the level curves
$G(z)+F(w)=c$. For $V$ sufficiently large, $\gamma$ will reach the
level curve to $c=F(1)$ at some point with $z>0$. For continuity
reasons, there exists then $V>0$ for which $\gamma$ becomes a
heteroclinic orbit, connecting $(0,0)$ and $(1,0)$; observe that
$\gamma$ is the only such orbit. Hence there is a smooth solution
$w$ to (\ref{TW3}) satisfying $w'(s)>0$ for all $s\in \R$ and
\begin{equation}
\label{w-asymptotics}
\text{$(w(s),w'(s))\to (0,0)$ as $s\to -\infty,\ \ $
$(w(s),w'(s))\to (1,0)$ as $s\to \infty$,}
\end{equation}
both exponentially fast.
Clearly each translate $w(\cdot+\alpha)$ with $\alpha\in\R$ enjoys
the same properties.

In order to investigate the stability of the travelling wave we
change to moving coordinates with $y:=x+Vt$ and
$\tilde{u}(t,y):=u(t,x)$, thereby transforming (\ref{TW1}) into
\begin{equation} \label{TW6}
\tilde{u}_t-(\sigma(\tilde{u}_y))_y+V\tilde{u}_y=f(\tilde{u}),\quad
t>0,\; y\in \R.
\end{equation}
Evidently, $\tilde{u}=w$ and all translates of it are equilibria of
(\ref{TW6}). Letting $v:=\tilde{u}-w$ be the deviation from $w$, the
equation for $v$ reads as
\begin{equation}
\label{TW7}
{v}_t-(\sigma({v}_y+w_y))_y+V({v}_y+w_y)=f(v+w),\quad t>0,\; y\in \R.
\end{equation}
For this equation, the set
\begin{equation}
\label{E}
\cE=\{w(\cdot+\alpha)-w(\cdot):\alpha\in \R\}
\end{equation}
forms a one-dimensional smooth manifold of equilibria. Observe
that $\cE$ contains all equilibria $w(y)$ of (\ref{TW7})
satisfying $w(y)\to 0$ as $|y|\to \infty$. By definition, the
travelling wave under study is stable, if $v= 0$ is stable for
(\ref{TW7}). Of course, this has to be understood in the sense of
a suitable functional analytic setting.

Let us choose again the $L_p$ setting to study (\ref{TW7}). Let
$1<p<\infty$, $X_0=L_p(\R)$, and $X_1=H^2_p(\R)$. Define
$M:X_1\rightarrow X_0$ by
\[
M(v)=-(\sigma({v}_y+w_y))_y+V({v}_y+w_y)-f(v+w),\quad v\in X_1,
\]
This definition makes sense since by \eqref{TW2}--\eqref{TW3} and
\eqref{w-asymptotics} $w_y$, $w_{yy}$, and $f(w)$ belong to
$L_p(\R)$.
The fact that $f(v+w)$ belongs to the space $L_p(\R)$
can be justified by observing that
$$
f(v(y)+w(y))=f(w(y))+\int_0^1 f^\prime(w(y)+\tau v(y))\,d\tau\,
v(y),\quad y\in\R.
$$
Since $v$ and $w$ belong to $C_0(\R)$ one readily verifies
that $f^\prime (v+w)$ is continuous and bounded, and this yields
the statement for $f(v+w)$.
The linearization $A_0:=M'(0)$ of $M$
at $v=0$ is given by
\[
A_0v=-(\sigma'(w_y)v_y)_y +Vv_y-f'(w)v, \quad v\in
D(A_0):=X_1.
\]
By (\ref{TW2}), $A_0$ is a uniformly elliptic operator with
smooth coefficients whose leading coefficient satisfies
$-\sigma'(w'(y))\to -\sigma'(0)$ as $|y|\to \infty$. Thus, by
\cite[Theorem 5.7]{DHP03}, $A_0$ enjoys the property of maximal
$L_p$-regularity.

Next observe that $\cE\subset H^3_p(\R)\hookrightarrow X_1$ and
that the tangent space for the manifold $\cE$ at $v=0$ coincides
with $\mbox{span}\,\{w'\}$, the span of $w'\in X_1$.
On the other hand, by differentiating (\ref{TW3})
we see that $A_0w'=0$. So to show
normal stability of $v=0$, it remains to prove that $0$ is a
simple eigenvalue of $A_0$ and that the remainder of the spectrum of
$A_0$ lies in $\{z\in\C:\mbox{Re}\,z>0\}$.
We proceed similarly as in \cite[p. 131]{Hen81}, generalizing the
proof given there to the quasilinear case. Suppose $\lambda$ with
$\mbox{Re}\,\lambda\le 0$ is an eigenvalue of $A_0$ with
eigenfunction $v\in X_1$, that is
\[
v''+2b(y)v'+\frac{f'(w(y))}{\sigma'(w'(y))}\,v+\frac{\lambda
v}{\sigma'(w'(y))}=0,\quad y\in \R,
\]
where
\[
b(y)=\frac{\sigma''(w'(y))w''(y)-V}{2\sigma'(w'(y))},\quad y\in \R.
\]
By studying the characteristic equation for the limits $y\to \pm
\infty$, one sees that there exist constants $\delta,\,C>0$ such
that
\[ |v(y)|\le Ce^{(\delta+V/\sigma'(0))y},\;\; y\le 0,\quad |v(y)|\le
Ce^{-\delta y},\;\;y\ge 0.
\]
Letting $\varphi(y)=v(y)\exp(\int_0^y b(r)\,dr)$, we have at least
$\varphi(y)=O(e^{-V|y|/(2\sigma'(0))})$ for $|y|\to\infty$, and
\begin{equation} \label{TW8}
\varphi''+\Big(\frac{\lambda
}{\sigma'(w'(y))}+\frac{f'(w(y))}{\sigma'(w'(y))}-b'(y)-b(y)^2\Big)\varphi=0,
\quad y\in\R.
\end{equation}
The function $\psi(y):=w'(y)\exp(\int_0^y b(r)\,dr)$ is strictly
positive, and since $A_0w'=0$, it satisfies
\begin{equation} \label{TW9}
\psi''+\Big(\frac{f'(w(y))}{\sigma'(w'(y))}-b'(y)-b(y)^2\Big)\psi=0,
\quad y\in\R.
\end{equation}
Combining (\ref{TW8}), (\ref{TW9}) yields
\[
\varphi''+\frac{\lambda
}{\sigma'(w'(y))}\,\varphi-\frac{\psi''}{\psi}\,\varphi=0.
\]
Multiplying this equation by $\bar{\varphi}$, integrating over $\R$,
and integrating by parts gives
\begin{equation*}
 \label{TW10}
 \begin{aligned}
\lambda\int_\R \frac{|\varphi(y)|^2}{\sigma'(w'(y))}\,dy
&=\int_\R - \frac{\bar\varphi(y)}{\psi(y)}
\big(\varphi''(y)\psi(y)-\varphi(y)\psi''(y)\big)\,dy \\
&=\int_\R
\psi(y)^2 \left\{\big[\big({\text{Re}\,\varphi(y)}/{\psi(y)}\big)'\,\big]^2
+\big[\big({\text{Im}\,\varphi(y)}/{\psi(y)}\big)'\,\big]^2
\right\}\,dy,
\end{aligned}
\end{equation*}
hence $\lambda=0$. In view of (\ref{TW8}) we may then suppose that
$\varphi$ is real and nonnegative, and so the last formula above implies
that $\varphi/\psi=v/w'$ is constant on $\R$. Hence
$N(A_0)=\mbox{span}\,\{w'\}$.

We now show that the essential spectrum $\sigma_e(A_0)$ of $A_0$,
that is the set of all spectral points except isolated eigenvalues
of finite multiplicity, is contained in $[{\rm Re}\,z>0]$. This can
be seen as follows.

By the asymptotics of $w$, see \eqref{w-asymptotics}, we know that
$f^\prime (w(y))\to f^\prime(0)=-a$ as $y\to-\infty$, and that
$f^\prime (w(y))\to f^\prime(1)=-(1-a)$ as $y\to \infty$ at an
exponential rate. Due to $a\in (0,1/2)$, $a$ is the smaller of the
two numbers $\{a,(1-a)\}$. Fix $\varepsilon>0$ so that
$a-\varepsilon>0$. We can then find a number  $R>0$ so that
$-f^\prime(w(y))\ge a-\varepsilon$ whenever $|y|\ge R$. Let $c$ be a
bounded continuous function on $\R$ that agrees with $-f'(w)$ on
$[|y|\ge R]$ and satisfies $c(y)\ge a-\varepsilon$ for all $y\in\R$.
Let $B:X_1\to X_0$ be the operator defined by
$$
Bv:=-(\sigma'(w_y)v_y)_y +Vv_y +cv.$$ One readily shows that $B$ is
accretive on $L_p(\R)$ and also that $\sigma(B)$, the spectrum of
$B$, is contained in $[{\rm Re}\,z\ge (a-\varepsilon)]$. Next, note
that $A_0$ can be written as $A_0=B+S$, where $S$ is a perturbation
which is relatively compact with respect to $B$.
For this we note that $S$ can be written as
$Sv=-(f^\prime(w) +c)\chi v$, where $\chi$ is a smooth cut-off function
for the interval $[-R,R]$, with support contained, say, in
$\Omega=(-2R,2R)$.
Since $H^s_p(\Omega)$ is compactly embedded in $L_p(\Omega)$
for $s>0$ we conclude that
$S$ is a compact operator from $H^s_p(\R)\to L_p(\R)$.
A well-known perturbation result now shows that
 $\sigma_e(A_0)$ must also be contained in
$[{\rm Re}\, z\ge (a-\varepsilon)]$. Since this is true for every
$\varepsilon$, we have proved that $\sigma_e(A_0)\subset[{\rm
Re}\,z\ge a]$. $A_0$ might still have isolated eigenvalues outside
of this set. As $A_0$ generates an analytic semigroup, they must be
contained in an appropriate sector with opening angle
$\theta<\pi/2$. Since we have already shown that there are no
eigenvalues in $[{\rm Re}\,z< 0]$ we conclude that there is a number
$\alpha>0$ so that $\sigma(A_0)\setminus\{0\}\subset [{\rm Re}\,z\ge
\alpha]$. Finally, since the operator $A_1$ defined by
\[
A_1
v:=v_{yy}+\Big(\frac{f'(w)}{\sigma'(w')}-b'(y)-b(y)^2\Big)v,\quad
v\in D(A_1)=X_1,
\]
is self-adjoint in $L_2(\R)$, it follows in view of (\ref{TW8}) with
$\lambda=0$ that the eigenvalue $0$ of $A_0$ is semi-simple.

Summarizing we have shown that
\begin{itemize}
\item the set $\cE$ consists of all equilibria of (\ref{TW7}) in $X_1$ and forms a smooth $1$-dimen\-sio\-nal manifold,
\item $T_0(\cE)={\rm span}\,\{w^\prime\}=N(A_0)$,
\item the eigenvalue $0$ is semi-simple,
\item $\sigma(A_0)\setminus\{0\}\subset \C_+=\{z\in\C:\, {\rm Re}\, z>0\}$,
\end{itemize}
and we can now formulate our main result for this section.
\begin{theorem}
Let $p\in (1,\infty)$, $f(r)=r(1-r)(r-a)$, $r\in \R$, with $a\in
(0,1/2)$, and $\sigma\in C^2(\R)$ such that (\ref{TW2}) holds.

Then (\ref{TW1}) possesses a travelling wave solution
$u(t,x)=w(x+Vt)$ with speed $V>0$ and profile $w\in C^3(\R)$
satisfying $w'(r)>0$, $r\in \R$, and $(w(r),w'(r))\to (0,0)$ as
$r\to -\infty$, as well as $(w(r),w'(r))\to (1,0)$ as $r\to
\infty$.

This travelling wave is stable in the sense that $v\equiv 0$ is a
stable equilibrium of (\ref{TW7}) in $X_\gamma=W_p^{2-2/p}(\R)$.
Moreover there exists $\delta>0$ such that if
$|v_0|_\gamma<\delta$, then the solution $v$ of (\ref{TW7}) with
$v(0)=v_0$ exists globally and converges at an exponential
rate in $X_\gamma$ to some equilibrium $v_\infty$, i.e. to some
element of the set $\cE$ defined in (\ref{E}). In this sense, any
solution $u$ of (\ref{TW1}) that starts sufficiently close to $w$
exists globally and converges at an exponential rate as $t\to
\infty$ to some translate $w(x+Vt+\alpha)$, $\alpha\in \R$, of the
travelling wave solution.
\end{theorem}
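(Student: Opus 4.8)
The plan is to read off the statement from \thmref{th:1}: essentially all the analytic work has been carried out in the discussion preceding the theorem, and what remains is to package it. The existence of the travelling wave profile $w$ with the asserted monotonicity and asymptotics \eqref{w-asymptotics} is obtained independently from the planar phase-plane analysis sketched above — one follows the branch $\gamma$ of the unstable manifold of $(0,0)$ in the first quadrant, uses the energy identity \eqref{TW5} to produce, for a suitable $V>0$, the (unique) heteroclinic orbit to $(1,0)$, and observes that every translate $w(\cdot+\alpha)$ inherits the same properties. For the stability assertion I would fix the $L_p$-setting $X_0=L_p(\R)$, $X_1=H^2_p(\R)$, $X_\gamma=W^{2-2/p}_p(\R)$, regard $v=\tilde u-w$ as the unknown in \eqref{TW7}, and verify the four hypotheses of \thmref{th:1} for $A_0=M'(0)$.

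Concretely the checklist is: (i) the set $\cE$ in \eqref{E} is a smooth one-dimensional manifold of equilibria near $v=0$ — because $\alpha\mapsto w(\cdot+\alpha)-w$ maps $\R$ smoothly into $X_1$ (indeed into $H^3_p(\R)$, using the exponential decay of $w'$ and $w''$) with nonvanishing derivative $w'$ at $\alpha=0$, and no other small equilibria of \eqref{TW7} decay at infinity; (ii) $T_0(\cE)=\mathrm{span}\{w'\}=N(A_0)$; (iii) $0$ is a semi-simple eigenvalue of $A_0$; and (iv) $\sigma(A_0)\setminus\{0\}\subset\C_+$. The maximal $L_p$-regularity of $A_0$ required by \thmref{th:1} follows from \cite[Theorem 5.7]{DHP03}, since $A_0$ is uniformly elliptic with smooth coefficients whose leading coefficient has limit $-\sigma'(0)$ at $\pm\infty$.

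The inclusion $\mathrm{span}\{w'\}\subset N(A_0)$ is immediate from differentiating \eqref{TW3}. For the remaining spectral picture the plan follows Henry's semilinear argument \cite[p.~131]{Hen81}, adapted to the quasilinear case. Given an eigenvalue $\lambda$ with $\mathrm{Re}\,\lambda\le0$ and eigenfunction $v$, the characteristic equations at $y\to\pm\infty$ give exponential decay of $v$; the conjugation $\varphi=v\exp(\int_0^y b)$, $\psi=w'\exp(\int_0^y b)$ removes the first-order term and produces \eqref{TW8}, \eqref{TW9} with $\psi>0$, whence eliminating $\psi''/\psi$, multiplying by $\bar\varphi$ and integrating by parts forces $\lambda=0$ and then $v/w'$ constant; this yields (ii) and the absence of eigenvalues in $[\mathrm{Re}\,z\le0]\setminus\{0\}$. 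For the essential spectrum I would write $A_0=B+S$, with $B$ obtained by replacing $-f'(w)$ by a bounded continuous $c\ge a-\varepsilon$ agreeing with $-f'(w)$ outside a large interval: $B$ is accretive on $L_p(\R)$ with $\sigma(B)\subset[\mathrm{Re}\,z\ge a-\varepsilon]$, while $S$ has compactly supported coefficient, hence is $B$-compact, so $\sigma_e(A_0)\subset[\mathrm{Re}\,z\ge a-\varepsilon]$ for every small $\varepsilon$; together with analyticity of the semigroup and the absence of eigenvalues in the closed left half-plane this produces a genuine spectral gap, i.e.\ (iv). Semi-simplicity (iii) of the eigenvalue $0$ follows because in the $\varphi$-variable the relevant operator $A_1$ is self-adjoint in $L_2(\R)$ with $\psi>0$ as its ground state, so there is no Jordan chain. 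Once (i)--(iv) are in place, \thmref{th:1} gives stability of $v=0$ in $X_\gamma$ together with global existence and exponential convergence to some $v_\infty\in\cE$ for small data; undoing $v=\tilde u-w$ and $y=x+Vt$ delivers the final assertion about solutions of \eqref{TW1}.

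I expect the spectral analysis — verifying (ii), (iii) and (iv) simultaneously — to be the only real obstacle. The $v$-dependent coefficient $\sigma'(w')$ multiplying $\lambda$ in \eqref{TW8} and the non-symmetric drift term $Vv_y$ prevent a direct appeal to classical self-adjoint Sturm--Liouville theory; the exponential weight $\exp(\int_0^y b)$ is exactly what restores symmetry and exposes the positive solution $\psi$, and one must be careful to justify the decay of $v$ and $\varphi$ at $\pm\infty$ so that all boundary terms in the integrations by parts vanish. Everything else — the manifold structure of $\cE$, the $C^1$- (here $C^\infty$-) regularity of the nonlinearity, and the translation back to the original variables — is routine once these spectral facts are available.
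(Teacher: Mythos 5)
Your proposal is correct and follows essentially the same route as the paper: phase-plane construction of $w$ via the energy identity \eqref{TW5}, maximal regularity from \cite[Theorem 5.7]{DHP03}, the exponential-weight conjugation $\varphi=v\exp(\int_0^y b)$ with the positive solution $\psi=w'\exp(\int_0^y b)$ to rule out eigenvalues in the closed left half-plane and identify $N(A_0)=\mathrm{span}\{w'\}$, the decomposition $A_0=B+S$ with $S$ relatively compact to locate the essential spectrum, semi-simplicity from the self-adjointness of $A_1$ in $L_2(\R)$, and finally an application of Theorem~\ref{th:1}. No substantive differences from the paper's argument.
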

\noindent This approach applies to many other travelling wave
solutions of quasilinear parabolic systems, as soon as the
condition of normal stability is satisfied.
\section{Convergence for abstract quasilinear problems II}
\noindent
We return to the setting of Section 2 for the case that 
$\sigma(A_0)$ also contains an unstable part, i.e. we now assume that
\begin{equation}
\label{spectrum}
\sigma(A_0)=\{0\}\cup\sigma_s\cup \sigma_u,\quad
\text{with } \sigma_s\subset \C_+,\; \sigma_u\subset \C_-,
\end{equation}
such that $\sigma_u\ne\emptyset$.
In this situation we can prove the following result.
\goodbreak
\begin{theorem}
\label{th:3}
Let $1<p<\infty$.
Suppose
$u^*\in V\cap X_1$ is an equilibrium of (\ref{u-equation}),
and suppose that the functions  $(A,F)$ satisfy \eqref{AF}.
Suppose 
further that $A(u^*)$ has the property of maximal
$L_p$-regularity. 
Let $A_0$ be the linearization of (\ref{u-equation}) at $u_*$.
Suppose that $u_*$ is normally hyperbolic, i.e.\ assume that
\begin{itemize}
\item[(i)] near $u_*$ the set of equilibria $\cE$ is a $C^1$-manifold in $X_1$ of dimension $m\in\N$,
\vspace{-4mm}
\item[(ii)] \, the tangent space for $\cE$ at $u_*$ is given by $N(A_0)$,
\item[(iii)] \, $0$ is a semi-simple eigenvalue of 
$A_0$, i.e.\ $ N(A_0)\oplus R(A_0)=X_0$,\item[(iv)]
 \, $\sigma(A_0)\cap i\R =\{0\}$, $\sigma_u:=\sigma(A_0)\cap \C_-\neq\emptyset$.
\end{itemize}
Then  $u_*$ is unstable in $X_\gamma$ and even in $X_0$.\\
 For each sufficiently small $\rho>0$ there exists $0<\delta\le \rho$ 
such that the unique solution $u(t)$ of (\ref{u-equation})
with initial value $u_0\in B_{X_\gamma}(u_*,\delta)$ either satisfies 
\begin{itemize}
\item[$\bullet$]
${\rm dist}_{X_\gamma}(u(t_0),\cE)>\rho$ for some finite time $t_0>0$, or 
\item[$\bullet $]
$u(t)$ exists on $\R_+$ and converges at an exponential rate
to some $u_\infty\in\cE$ in $X_\gamma$ as $t\rightarrow\infty$. 
\end{itemize}
\end{theorem}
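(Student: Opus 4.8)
The plan is to adapt the reduction to normal form carried out in the proof of Theorem~\ref{th:1}, but now with a three-way spectral splitting $\sigma(A_0)=\{0\}\cup\sigma_s\cup\sigma_u$ according to the center, stable, and unstable parts. First I would introduce the spectral projections $P^c$, $P^s$, $P^u$ and the corresponding decomposition $X_j=X_j^c\oplus X_j^s\oplus X_j^u$. Since both $\sigma_c=\{0\}$ and $\sigma_u$ are compact (the latter because it is the spectral part in $\C_-$, bounded away from $i\R$), the spaces $X^c$ and $X^u$ are finite-dimensional and contained in $X_1$; $A_u$, the part of $A_0$ in $X_0^u$, has spectrum in $\C_-$, so $-A_u$ generates an exponentially growing group on the finite-dimensional space $X^u$. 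As in part~(b) of the proof of Theorem~\ref{th:1}, the equilibrium manifold $\cE$ is still, near $u_*$, the translated graph of a $C^1$-function; here, however, since by assumptions (i)--(iii) we have $N(A_0)=X^c$ and $\dim X^c=m$, the graph is parameterized over $X^c$ with values in $X^s_1\oplus X^u_1$, and I obtain a function $\phi\in C^1(B_{X^c}(0,\rho_0),X^s_1\oplus X^u_1)$ with $\phi(0)=\phi'(0)=0$ such that $\cE\cap W=\{x+\phi(x)+u_*:x\in B_{X^c}(0,\rho_0)\}$; the construction via the implicit function theorem in Remark (b) goes through verbatim since $A_s\oplus A_u$ is invertible on $X^s_0\oplus X^u_0$.

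Next I would write the normal form. With the new variables $x=P^c v$, $y=P^s v-P^s\phi(P^c v)$, $z=P^u v-P^u\phi(P^c v)$, where $v=u-u_*$, equation \eqref{v-equation} becomes a system
\begin{equation*}
\dot x=T(x,y,z),\quad \dot y+A_s y=R(x,y,z),\quad \dot z+A_u z=Q(x,y,z),
\end{equation*}
with $T,R,Q$ vanishing whenever $(y,z)=0$, and with the same small-Lipschitz estimates as in \eqref{G-estimate}--\eqref{estimate-R-T}: for any $\eta>0$ one may pick $r=r(\eta)$ small so that the nonlinearities are $\beta$-Lipschitz in $(y,z)$ on the relevant balls with $\beta=C(\eta+r)$ arbitrarily small. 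The crucial analytic input is the maximal $L_p$-regularity estimate \eqref{M0} for $A_s-\omega$, which survives on $(0,\infty)$, together with the exponential-dichotomy estimates for $e^{-A_u t}$ on the finite-dimensional unstable space (exponential decay of $e^{A_u t}$ as $t\to-\infty$, equivalently growth forward in time).

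The core of the proof is then a saddle-point / stable-manifold argument à la Lyapunov--Perron, carried out in the spirit of \cite{LPS06} (the hyperbolic case there). I would show: for each small $\rho>0$ there is $\delta\le\rho$ such that whenever $|u_0-u_*|_\gamma<\delta$ and the solution does \emph{not} leave the $\rho$-tube around $\cE$ in finite time, the unstable component $z$ must stay bounded; but boundedness of $z$ forward in time, combined with $\dot z+A_u z=Q$ and the instability of $A_u$, forces (via the variation-of-constants formula projected onto the unstable subspace and a fixed-point argument) that $z(t)$ is uniquely determined by $(x,y)$ as $z(t)=-\int_t^\infty e^{-A_u(t-s)}Q(x,y,z)(s)\,ds$, i.e. the solution lies on a local stable manifold $\cM_s$ through $\cE$; on $\cM_s$ one is effectively back in the normally stable situation of Theorem~\ref{th:1}, so the $y$-component decays exponentially (using \eqref{M0}), the $x$-component converges exponentially to some $x_\infty$ by an absolutely-convergent-integral argument exactly as in parts (e)--(f) of the proof of Theorem~\ref{th:1}, and $u(t)\to x_\infty+\phi(x_\infty)+u_*\in\cE$ exponentially. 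Conversely, if the initial unstable component is not of this special "stable-manifold" form, the $z$-equation forces $|z(t)|$ to grow until the solution exits the $\rho$-tube at some finite $t_0$, which also yields instability of $u_*$ in $X_\gamma$ (and in $X_0$, by the equivalence of norms on the finite-dimensional $X^u$ together with a compactness/smoothing argument, or directly since the relevant ball can be taken in $X_0$ for the unstable directions).

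The main obstacle I anticipate is the fixed-point / invariant-manifold step: one must set up the Lyapunov--Perron operator in the right weighted function space — $e^{\omega t}$-weighted $\EE_1$-norm on the stable-plus-center part, sup-norm with a matching exponential weight on the finite-dimensional unstable part — and check that, thanks to \eqref{beta} (i.e. $M_0\beta\le 1/2$) and the spectral gap around $i\R$, the operator is a contraction mapping the ball into itself, so that the dichotomy of the nonlinear flow near $\cE$ follows. Handling simultaneously the center directions (which neither grow nor decay) and the non-compactness of $X^s_1$ requires care, but the estimates needed are precisely those already assembled in part (d) of the proof of Theorem~\ref{th:1} and in \cite[Prop.~3.1]{LPS06}; the finite-dimensionality of $X^c\oplus X^u$ keeps the trichotomy bookkeeping manageable. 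The remaining parts — global existence on $\R_+$ for solutions that stay in the tube, via \cite[Cor.~3.2]{Pru03}, and the exponential convergence rate — then carry over unchanged from Theorem~\ref{th:1}.
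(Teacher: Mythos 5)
Your setup (the three--way spectral splitting, the graph representation of $\cE$ over $X^c$ with values in $X^s_1\oplus X^u_1$, the normal form in the variables $x,y,z$, and the backward variation--of--constants formula for the unstable component) matches the paper's. But there is a genuine gap at the heart of your argument: you assert that the center component is handled ``exactly as in parts (e)--(f) of the proof of Theorem~\ref{th:1}'' with ``precisely'' the estimates of part (d) there. This fails in the hyperbolic case. In Theorem~\ref{th:1} the drift of $x$ is controlled by H\"older via the \emph{weighted} bound $\no e^{\omega t}y\no_{\EE_1(t_1)}\le 2M_1|y_0|_\gamma$. Here the $y$--equation also contains $z$, and before global existence is known the only information on $z$ on $[0,t_1]$ comes from integrating its equation backwards from $t_1$, which produces the boundary term $M_5e^{-\omega_1(t_1-t)}|z(t_1)|$. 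That term is $O(\rho)$ in $L_q(0,t_1)$ uniformly in $t_1$ only in \emph{unweighted} norms; multiplied by $e^{\omega t}$ its $L_p$--norm grows like $e^{\omega t_1}$, so the weighted estimate does not close. One is left with an unweighted $L_p$--bound on $y$, and H\"older then gives only $\no y\no_{L_1((0,t_1);X_1)}\le t_1^{1/p'}\no y\no_{L_p}$, useless for keeping $|x(t)|$ small on long intervals. The paper's resolution --- maximal regularity in the Besov space $B^\alpha_{1\infty}((0,t_1);X)$ with $\alpha=1/pp'$, together with the H\"older--in--time bounds \eqref{HypEst-z}--\eqref{HypEst-y} needed to estimate the seminorm of the nonlinearity --- is a genuinely new ingredient that your proposal does not anticipate and cannot import from Theorem~\ref{th:1}.

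The Lyapunov--Perron detour is also not a substitute for this step. The identity $z(t)=-\int_t^\infty e^{A_u(s-t)}R_u(x,y,z)(s)\,ds$ holds automatically for any globally bounded solution (this is exactly what the paper uses in its step (f) to get exponential decay), so no invariant--manifold construction is needed once global existence and confinement in $B_{X_\gamma}(u_*,3\rho)$ are established; and as a means of \emph{obtaining} confinement it reintroduces precisely the center--stable manifold machinery --- graph construction, invariance, and the identification of tube--confined solutions with manifold solutions (a shadowing--type statement) --- that the paper is expressly designed to avoid under mere $C^1$ regularity of $(A,F)$. You would still have to prove that a solution which merely stays $\rho$--close to $\cE$ in $X_\gamma$ lies on your manifold, and that step is not addressed. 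Two minor points: the paper does not claim, and does not need, that $X^u$ is finite--dimensional (only that $\sigma_u$ is a compact spectral set, whence $P^uX_0\subset X_1$ with equivalent norms); and the instability assertion is obtained by citing \cite[Theorem 6.2]{Pru03} rather than by the growth argument you sketch.
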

\begin{proof}
The first assertion follows from \cite[Theorem 6.2]{Pru03}, so we need to prove 
the second claim.\\
(a)
Let $P^l$ denote the spectral projections corresponding to the
spectral sets $\sigma_l$, 
where $\sigma_c=\{0\}$ and $\sigma_s,\sigma_u$ are 
as in \eqref{spectrum}.
Let $X^l_j=P^l(X_j)$, $l\in\{c,s,u\}$,
where these spaces are
equipped with the norms of $X_j$ for 
 $j\in\{0,1,\gamma\}$. 
We may assume that $X_1$ is equipped with the
graph norm of $A_0$, i.e. $|v|_1:=|v|_0+|A_0v|_0$ for $v\in X_1$.
 Since the operator $-A_0$ generates an analytic $C_0$-semigroup on $X_0$, $\sigma_u$ is a compact spectral set for $A_0$. 
This implies that $P^u(X_0)\subset X_1$.
Consequently, $X^u_0$ and $X^u_1$ coincide as vector spaces.
In addition, since $A_u$, the part of $A_0$ in $X^u_0$, is invertible, 
we conclude that the spaces $X^u_j$ carry equivalent norms. 
We set $X^u:=X^u_0=X^u_1$ 
and equip $X^u$ with the norm of $X_0$, that is, $X^u=(X^u,|\cdot|_0)$.  
As in the proof of Theorem~\ref{th:1}
we obtain the decomposition
\begin{equation*}
X_1=X^c\oplus X_1^s\oplus X^u ,\quad X_0=X^c\oplus X_0^s\oplus X^u,
\end{equation*}
and this decomposition reduces $A_0$ into 
$A_0=A_c\oplus A_s\oplus A_u$,
where  $A_l$  is the part of $A_0$ in $X^l_0$
for $l\in\{c,s,u\}$. 
It follows that $\sigma(A_l)=\sigma_l$ for $l\in\{c,s,u\}$.
Moreover, due to assumption (iii), $A_c\equiv 0$. In the sequel, as a norm in 
$X_j$ we take 
\begin{equation}
\label{norm}
|v|_j=|P^cv|+|P^sv|_j+|P^uv|\quad\text{for}\quad j=0,\gamma,1.
\end{equation} 
We remind that the spaces $X^l_j$ have been given the norm
of $X^l_0$ for $l\in\{c,u\}$.

We also fix constants $\omega\in (0,\inf {\rm Re}\,\sigma(-A_u))$ and $M_5>0$ such that
$|e^{A_ut}|\le  M_5e^{-\omega t}$ for all $t>0$. 
Wlog\ we may take $\omega\le 1$.
\medskip\\
(b)
Let $\Phi$ be the mapping obtained in step (b) of the proof
of Theorem~\ref{th:1}, and set
$\phi_l(x):=P^l\Phi(x)$ for $l\in\{s,u\}$ and for $x\in B_{X^c}(0,\rho_0)$.
Then
\begin{equation}
\label{phi-s-u}
\phi_l\in C^1(B_{X^c}(0,\rho_0), X^l_1),
\quad \phi_l(0)=\phi^\prime_l(0)=0\quad\text{for } l\in\{s,u\}.
\end{equation}
These mappings parametrize the manifold $\cE$ of equilibria
near $u_\ast$ via
$$
[x\mapsto (x+\phi_s(x)+\phi_u(x)+u_\ast)],\quad x\in B_{X^c}(0,\rho_0).
$$
We may assume that
$\rho_0$ has been chosen small enough so that
\begin{equation}
\label{small}
\no \phi^\prime_l(x)\no_{\cB(X^c,X^1_l)}\le 1,
\quad x\in B_{X^c}(0,\rho_0),\quad l\in\{s,u\}.
\end{equation}
(c) The equilibrium equation \eqref{equilibrium-psi} now corresponds
to the system
\begin{equation}
\label{equilibrium-xyz}
\begin{aligned}
 P^cG(x+\phi_s(x)+\phi_u(x))&=0,\\
 P^l G(x+\phi_s(x)+\phi_u(x))&=A_l\phi_l(x),
 \quad x\in B_{X^c}(0,\rho_0), \quad l\in \{s,u\}.
\end{aligned}
\end{equation}
The canonical variables are
$$x=P^c v,\quad y= P^sv-\phi_s(x),\quad z=P^uv-\phi_u(x)$$
and the canonical form of the system  is given by
\begin{equation}
\label{hypcanform}
\left\{
\begin{aligned}
\dot{x}&=T(x,y,z),& x(0)=x_0,\\
\dot{y}+A_sy&=R_s(x,y,z), & y(0)=y_0, \\
\dot{z}+A_uz&=R_u(x,y,z),& z(0)=z_0.
\end{aligned}
\right.
\end{equation}
Here the functions $T$, $R_s$, and $R_u$ are given by
\begin{equation}
\label{RTS-s-l}
\begin{aligned}
T(x,y,z)=& \,P^c\big(G(x+y+z+\phi_s(x)+\phi_u(x))-G(x+\phi_s(x)+\phi_u(x))\big),\\
R_l(x,y,z)=& \,P^l\big(G(x+y+z +\phi_s(x)+\phi_u(x))-G(x+\phi_s(x)+\phi_u(x))\big) \\
& -\phi_l^\prime(x)T(x,y,z),\quad l\in\{s,u\}, \\
\end{aligned}
\end{equation}
where we have used the equilibrium equations
\eqref{equilibrium-xyz}. Clearly,
\begin{equation*}
\label{RTS=0}
R_l(x,0,0)=T(x,0,0)=0,\quad x\in B_{X^c}(0,\rho_0),\quad l\in\{s,u\},
\end{equation*}
showing that the equilibrium set $\cE$ of (\ref{u-equation}) near $u_*$
has been reduced to the set
$B_{X^c}(0,\rho_0)\times\{0\}\times\{0\}\subset X^c\times X^s\times X^u$.
\smallskip\\
There is a unique correspondence between the solutions of (\ref{u-equation})
close  to $u_*$ in $X_\gamma$ and those of (\ref{hypcanform}) close to $0$.
We again call (\ref{hypcanform})
the {\em normal form} of (\ref{u-equation}) near
its {\em normally hyperbolic} equilibrium $u_*$.
\medskip\\
(d)
The estimates for $R_l$ and $T$ are similar to those derived in Section 2,
and we have
\begin{equation}
\label{estimate-Rl-T}
 |T(x,y,z)|,\ |R_l(x,y,z)|_0 \le \beta(|y|_1+|z|), 
\end{equation}
for all 
	$x,z\in \bar{B}_X^{\tilde l}(0,\rho)$,
	 $\tilde l\in\{c,u\}$, and $y\in \bar B_{X^s_\gamma}(0,\rho)\cap X_1$,
where $\rho\le\rho_0$, $r=5\rho,$  and $\beta=C_2(\eta+r)$.
\medskip\\
(e) Let us assume for the moment that 
$\rho$ is chosen so that $4\rho\le \rho_0$.
Let $u(t)=u_*+\Phi(x(t))+y(t)+z(t)$ be a solution 
of \eqref{hypcanform}
on some maximal time interval $[0,t_*)$ which satisfies 
${\rm dist}_{X_\gamma}(u(t),\cE)\le \rho$. Set
\begin{equation*}
t_1:=t_1(x_0,y_0,z_0):=\sup\{t\in (0,t_\ast)\st 
|u(\tau)-u_*|_\gamma \le 3\rho,\ \tau\in [0,t]\}
\end{equation*}
and suppose that $t_1<t_\ast$. Assuming wlog that the embedding constant 
of $X_1\hookrightarrow X_\gamma$ is less or equal to one 
it follows from \eqref{norm}, \eqref{small} and the definition of $t_1$ that
\begin{equation}
\label{XYZ}
|x(t)|, |y(t)|_\gamma, |z(t)|\le 3\rho,\quad t\in [0,t_1],
\end{equation}
so that the estimate \eqref{estimate-Rl-T}
holds for $(x(t),y(t),z(t))$, $t\in [0,t_1]$.

Since $\cE$ is a finite-dimensional manifold,
for each $u\in B_{X_\gamma}(u_*,3\rho)$ 
there is $\bar{u}\in\cE$ such that 
${\rm dist}_{X_\gamma}(u,\cE)=|u-\bar{u}|_\gamma$, and by the triangle 
inequality  $\bar{u}\in B_{X_\gamma}(u_*,4\rho)$. Thus we may write 
$u=u_*+\Phi(x)+y+z$ and $\bar{u}=u_*+\Phi(\bar{x})$, and therefore
\begin{align*}
\rho\ge{\rm dist}_{X_\gamma}(u,\cE)&=|u-\bar{u}|_\gamma\\
&=
|x-\bar{x}|+|y+\phi_s(x)-\phi_s(\bar{x})|_\gamma+|z+\phi_u(x)-\phi_u(\bar{x})|\\
&\ge |x-\bar{x}|+|z|-|\phi_u(x)-\phi_u(\bar{x})|\ge |z|,
\end{align*}
since $x,\bar x\in B_{X^c}(0,\rho_0)$ and $\phi_s$ is non-expansive,
see \eqref{small}.
Therefore we obtain the improved estimate
$|z(t)|\le  \rho$ for all $t\in [0,t_1]$.

We begin the estimates with that for the unstable component $z(t)$. Integrating the 
equation for $z$ backwards yields
\begin{equation}
\label{z-backwards}
z(t)= e^{A_u (t_1-t)}z(t_1)-
\int_t^{t_1}e^{A_u(s-t)} R_u(x(s),y(s),z(s))\,ds.
\end{equation}
With \eqref{estimate-Rl-T} and $|z(t_1)|\le \rho$
we get
\begin{equation*}
|z(t)|\le  M_5e^{-\omega(t_1-t)}\rho + \beta M_5\int_t^{t_1} e^{-\omega(s-t)}
(|y(s)|_1 + |z(s)|)\,ds
\end{equation*}
for $t\in [0,t_1]$.
Gronwall's inequality yields 
\begin{equation*}
\label{esthypz}
|z(t)|\le  M_5 e^{-\omega_1(t_1-t)}\rho + \beta M_5\int_t^{t_1}e^{-\omega_1(s-t)}
|y(s)|_1\,ds
\end{equation*}
for $t\in [0,t_1],$
where $\omega_1=\omega-\beta M_5>0$ provided $\beta$, i.e.\ $\eta,r$ are small enough.
In particular, with $M_6=M_5/\omega_1$, this inequality implies
\begin{equation}
\label{z-Lq}
\no z\no _{L_q(J_1;X_0)}\le  M_6\rho 
+\beta M_6\no y\no _{L_q(J_1;X_1)},
\end{equation}
where we have set $J_1=(0,t_1)$; here $q\in[1,\infty]$ is arbitrary at the moment. 
A similar estimate holds for the time-derivative of $z$, namely
\begin{equation}
\label{dotz-Lq}
\no \dot{z}\no _{L_q(J_1;X_0)}
\le  (\no A_u\no +\beta)\no z\no _{L_q(J_1;X_0)}+
\beta\no y\no _{L_q(J_1;X_1)}.
\end{equation}
Note that 
\begin{equation}
\label{HypEst-z}
\begin{aligned}
|z(t+h)-z(t)|&\le  h^{1/p^\prime}\no \dot{z}\no _{L_p(J_1;X_0)},
\\
\int_0^{t_1-h}|z(t+h)-z(t)|\,dt &\le  h \no \dot{z}\no_{L_1(J_1;X_0)}.
\end{aligned}
\end{equation}

Next we consider the equation for $x$. We have
\begin{align*}
|x(t)| &\le |x_0| +\int_0^t|\dot{x}(s)|\,ds
= |x_0|+\int_0^t|T(x(s),y(s),z(s))|\,ds\\
 &\le |x_0| +\beta( \no y\no_{L_1(J_1; X_1) } +\no z\no_{L_1(J_1;X_0)}).
\end{align*}
Combining this estimate with that for $z$ we obtain
\begin{equation*}
\begin{aligned}
\label{esthypx}
\sup_{t\in J_1}|x(t)| &\le |x_0| +\no \dot{x}\no _{L_1(J_1;X_0)},
 \\\no \dot{x}\no _{L_q(J_1;X_0)} 
&\le \beta (M_6\rho +
(1+\beta M_6)\no y\no_{L_q(J_1; X_1)} ).
\end{aligned}
\end{equation*}
This estimate is best possible and shows that in order to control $|x(t)|$ 
we must be able to control $\no y\no_{L_1(J_1; X_1)}$. Note that \begin{equation}
\label{HypEst-x}
\begin{aligned}
|x(t+h)-x(t)|&\le  h^{1/p^\prime}\no\dot{x}\no_{L_p(J_1;X_0)}, 
\\
\int_0^{t_1-h}|x(t+h)-x(t)|\,dt
&\le  h \no\dot{x}\no_{L_1(J_1;X_0)}.
\end{aligned}
\end{equation}
Now we turn to the equation for $y$, the stable but infinite dimensional 
part of the problem.
As in the proof of Theorem~\ref{th:1}, part (e), we obtain
from \eqref{estimate-Rl-T}
\begin{equation*}
\label{y-estimate-stable}
\no  y\no_{\EE_1(t_1)}
\le M_1|y_0|_\gamma + \beta M_0 (\no y\no_{\EE_1(t_1)}+\no z\no_{\EE_0(t_1)}).
\end{equation*}
Employing \eqref{z-Lq} with $q=p$ we get
\begin{equation*}
\label{y-estimate-stable-III}
\no  y\no_{\EE_1(t_1)}
\le M_1|y_0|_\gamma + \beta M_0 M_6\rho  + 
\beta M_0(1+ \beta M_6)\no y\no_{\EE_1(t_1)}.
\end{equation*}
Assuming $\beta M_0 (1+ \beta M_6))<1/2$, this yields
\begin{equation}
\label{y-estimate-stable-IV}
\no  y\no_{\EE_1(t_1)}
\le 2M_1|y_0|_\gamma + 2\beta M_0 M_6\rho.
\end{equation}
Repeating the estimates leading up to
\eqref{y-estimate-gamma} with $\sigma=0$ we now get
\begin{equation}
|y(t)|_\gamma \le C_5 
(|y_0|_\gamma + \beta \rho) ,\quad t\in [0,t_1],
\end{equation}
where $C_5$ is a constant independent of $\rho$, $y_0$
and $t_1$. 
In particular, we see that
$|y(t)|_\gamma\le  \rho $ for all $t\in J_1$, provided $|y_0|_\gamma$ and $\beta$, i.e.\ 
$\eta$ and $r$  are sufficiently small.
\medskip\\
For later purposes we need an estimate for $|y(t+h)-y(t)|_\gamma$. We have
\begin{equation}
\label{HypEst-y}
\begin{aligned}
|y(t+h)-y(t)|_\gamma&\le C |y(t+h)-y(t)|_0^{1-\gamma}|y(t+h)-y(t)|_1^\gamma\\
&\le  C h^{(1-\gamma)/p^\prime}\no \dot{y}\no ^{1-\gamma}_{L_p(J_1;X_0)}
(|y(t+h)|_1^\gamma+y(t)|_1^\gamma)
\end{aligned}
\end{equation}
for all $t\in [0,t_1]$, $t+h\in [0,t_1]$ 
with $y(t+h),y(t)\in X_1$. We remind that $\gamma=1-1/p$.

Unfortunately, this is not enough to keep $|x(t)|$ small on $J_1$, for this we need to control $\no y\no_{L_1(J_1; X_1)}$, and we cannot
expect maximal regularity in $ L_1$.  

To handle $\no y\no_{L_1(J_1; X_1)}$, 
we are forced to use another type of maximal regularity, namely that for the vector-valued Besov
spaces $B^\alpha_{1\infty}(J_1;X)$, where $\alpha\in(0,1)$; 
cf.\ \cite[Theorem 7.5]{Pru93}. 
Before stating the result we remind that
\begin{equation*}
\begin{split}
&\no g\no_{B^\alpha_{1\infty}(J;X)}
:=\no g\no_{L_1(J;X)}+[g]_{J;\alpha,X}, \\
&[g]_{J;\alpha,X}:=\sup_{0<h\le \min(1,a)}
h^{-\alpha}\int_0^{a-h} |g(t+h)-g(t)|_X\,dt
\end{split}
\end{equation*}
defines a norm for $g\in B^\alpha_{1\infty}(J;X)$,
where $J=(0,a)$.
The maximal regularity result, 
which is valid for all exponentially stable analytic $C_0$-semigroups, 
reads as follows: 
there is a constant $M_{7}$ depending only on 
$A_s$ and on $\alpha\in(0,1)$ such that the solution $y$ of 
\begin{equation}
\label{SCP}\dot{y}+A_sy =f,\quad t\in J, \quad y(0)=y_0,
\end{equation}
satisfies the estimate
$$
\no y\no _{B^\alpha_{1\infty}(J;X_1^s)}\le M_{7}\big(|y_0|_{D_{A_s}(\alpha,\infty)}+
\no f\no _{B^\alpha_{1\infty}(J;X^s_0)}\big).
$$
Note that this estimate is in particular independent of $J=(0,a)$, by exponential stability of $e^{-A_st}$.
Further we have
$
y_0\in X_\gamma\cap X^s= D_{A_s}(1-1/p,p)\hookrightarrow D_{A_s}(\alpha, \infty),
$
provided $\alpha\le 1-1/p$. 
Another parabolic estimate valid for \eqref{SCP} that
we shall make use of reads as
$$
\no y\no _{B^\alpha_{1\infty}(J;X^s_1)}\le
 M_{8}\big(|y_0|_{D_{A_s}(\alpha,\infty)}+\no f\no _{L_1(J_1;X^s_1)}),
\quad 
$$
provided $\alpha<1$. Here the constant $M_{8}$ is also independent of 
$J=(0,a)$.
\smallskip\\
We set $R_1(t)=-\phi_s^\prime(x(t))T(x(t),y(t),z(t))$ and recall that 
$|\phi_s^\prime(x(t))|_{\cB(X^c,X_1)}\le  1$
for $t\in [0,t_1]$. Employing the $L_1$-estimate for $z$,
see \eqref{z-Lq}, yields
\begin{align*}
\no R_1\no _{L_1(J_1;X_1)}&\le  \int_0^{t_1}|T(x(s),y(s),z(s))|\,ds
\le  \beta (\no y\no _{L_1(J_1;X_1)}+\no z\no _{L_1(J_1;X_0)})\\
&\le  \beta M_6\rho +\beta (1+\beta M_6)\no y\no _{L_1(J_1;X_1)}.
\end{align*}  
 Therefore, for the solution $y_1$ of
\eqref{SCP} with $f=R_1$ we obtain
$$\no y_1\no _{B^\alpha_{1\infty}(J_1;X_1)}
\le
 M_8\big(|y_0|_{\gamma}
 +\beta M_6\rho +\beta(1+M_6\beta)\no y\no _{L_1(J_1;X_1)}\big).
 $$
Next let $R_2(t)=P^s(G(\Phi(x)+y+z)-G(\Phi(x)))$. 
Then by estimate \eqref{G-estimate}
\begin{align*}
\no R_2\no _{L_1(J_1;X_0)}
&\le  \beta (\no y\no _{L_1(J_1;X_1)}
+\no z\no _{L_1(J_1;X_0)})\\
&\le   \beta M_6\rho +\beta(1+M_6\beta)\no y\no _{L_1(J_1;X_1)},
\end{align*}  
and with some constant $C_6$ 
\begin{align*}
|R_2(t)-R_2(\bar{t})|_0
&\le  C_6\beta\big(|y(t)-y(\bar{t})|_1 +|z(t)-z(\bar{t})|
+ |x(t)-x(\bar{t})|\big)\\
&+ C_6|y(t)|_1\big(|y(t)-y(\bar{t})|_\gamma+|x(t)-x(\bar{t})|+|z(t)-z(\bar{t})|\big).
\end{align*}
Hence we obtain the following estimate 
\begin{align*}
[R_2]_{\alpha,0}
&\le  C_6\beta \big\{[y]_{\alpha,1} + 
[z]_{\alpha,0}+[x]_{\alpha,0}\big\}
+C_6\sup_{0< h\le h_1}\!\!h^{-\alpha}\int_0^{t_1-h}|y(t)|_1\\
&\cdot\big\{|y(t+h)-y(t)|_\gamma
+|x(t+h)-x(t)|+|z(t+h)-z(t)|\big\}\,dt
\end{align*}
where we set $h_1:=\min(1,t_1)$ and $[\,\cdot\,]_{\alpha,j}:=[\,\cdot\, ]_{J_1;\alpha,X_j}$
for $j=0,1.$
\eqref{z-Lq}--\eqref{HypEst-z} yields for each $\alpha\in(0,1)$
$$[z]_{\alpha,0}\le  \no \dot{z}\no _{L_1(J_1;X_0)}\le 
 C_7(\rho+\beta\no y\no _{L_1(J_1;X_1)}),$$
with some uniform constant $C_7$.
In the same way we may estimate $[x]_{\alpha,0}$. 
Next we have again by \eqref{z-Lq}--\eqref{HypEst-z} 
\begin{align*}
\sup_{0<h\le h_1} h^{-\alpha}\int_0^{t_1-h}|y(t)|_1|z(t+h)-z(t)|\,dt
&\le  h^{1/p^\prime-\alpha}\no \dot{z}\no _{L_p(J_1;X_0)}
\no y\no _{L_1(J_1;X_1)}\\
&\le  C_8(|y_0|_\gamma+\rho)\no y\no _{L_1(J_1;X_1)}
\end{align*}
provided $\alpha\le  1-1/p$, and similarly for the corresponding integral 
containing the $x$-difference.
Last but not least, 
for $\alpha\le  (1-\gamma)(1-1/p)$ we have by 
\eqref{HypEst-y}
\begin{align*}
\sup_{0< h\le h_1}h^{-\alpha}
&\int_0^{t_1-h}|y(t)|_1|y(t+h)-y(t)|_\gamma\,dt\\
&\le 
2 C h^{(1-\gamma)/p^\prime-\alpha}\no \dot{y}\no ^{1-\gamma}_{L_p(J_1;X_0)}
\no y\no _{L_{p}(J_1;X_1)}\no y\no _{L_{1}(J_1;X_1)}^{\gamma}\\
&\le  C_9(|y_0|_\gamma+\beta\rho)^{2-\gamma}\no y\no _{L_{1}(J_1;X_1)}^{\gamma}\\
&\le C_{10}\big((|y_0|_\gamma+\beta\rho)^2
+ (|y_0|_\gamma+\beta\rho)\no y\no_{L_1(J_1;X_1)}
\big),
\end{align*}
where we used Young's inequality in the last line. \\

Collecting now all terms and choosing $\alpha=(1-\gamma)/p^\prime = 1/pp^\prime$,
we  find a uniform constant $C_{11}$ such that for $|y_0|_\gamma\leq\delta$
$$\no y\no _{B^\alpha_{1\infty}(J_1;X_1^s)}
\le C_{11}\big(|y_0|_\gamma+\beta\rho +
(\beta+\rho+\delta)\no y\no _{B^\alpha_{1\infty}(J_1;X_1^s)}\big),$$
hence 
\begin{equation}
\label{y-L1}
\no y\no _{L_1(J_1;X_1^s)}\le \no y\no _{B^\alpha_{1\infty}(J_1;X_1^s)}
\le 2C_{11}(|y_0|_\gamma+ \beta \rho),
\end{equation}
provided $ C_{11}(\beta+\rho+\delta) <1/2$.
Choosing now first $\beta$, i.e.\ $\eta$ and $r$  
small enough, and then $\rho$ and 
$\delta>0$, we see 
that $|u(t_1)-u_*|_\gamma <3\rho$, a contradiction to $t_1<t_*$. 
As in (e) of the proof
of Theorem 2.1 we may then conclude that $t_*=\infty$, 
which means that the solution 
exists globally and stays in the ball 
$\bar{B}_{X_\gamma}(u_*,3\rho)$. 
\medskip\\
(f) To prove convergence, let $(x(t),y(t),z(t))$ be a global solution of
\eqref{hypcanform} that satisfies
\[
 |x(t)|, |y(t)|_\gamma,  |z(t)|\le 3\rho,\quad \mbox{for all}\; t\ge 0, \quad
\]
see \eqref{XYZ}.
Similarly to the proof of Theorem~\ref{th:1}, part (e), we obtain
from \eqref{estimate-Rl-T}
\begin{equation}
\label{y-estimate-II} \no e^{\omega t} y\no_{\EE_1(\infty)} \le
2M_1|y_0|_\gamma + 2\beta M_0 \no e^{\omega t}z\no_{\EE_0(\infty)},
\end{equation}
where $\omega\in (0,\inf \{\mbox{Re}\,\lambda: \lambda\in
\sigma(A_s)\})$ is a fixed number and $\beta$ is given in
\eqref{beta}. Repeating the estimates leading up to
\eqref{y-estimate-gamma} we get
\begin{equation}
\label{y-estimate-II-gamma} |e^{\omega t}y(t)|_\gamma \le M_2
|y_0|_\gamma + 2\beta c_0M_0 \no e^{\omega t}z\no_{\EE_0(\infty)}, \quad
t\ge 0.
\end{equation}
From equation \eqref{z-backwards} we infer that
\begin{equation}
\label{zident}
z(t)= -\int_t^\infty e^{-A_u(t-s)}
     R_u(x(s),y(s),z(s))\,ds,\quad t\ge 0,
\quad 
\end{equation}
since $|z(t_1)|\le\rho$ for each $t_1>0$ and $e^{A_u(t_1-t)}$ 
is exponentially decaying for $t_1\to \infty$.
Using (\ref{zident})
and the estimate for $R_u$ from (\ref{estimate-Rl-T}) and proceeding
as in the proof of Young's inequality for convolution integrals one
shows that
\begin{equation} \label{zweight}
\no e^{\omega t} z\no_{\EE_0(\infty)}\le C_{12}\beta\big(\no e^{\omega
t} y\no_{\EE_1(\infty)}+\no e^{\omega t} z\no_{\EE_0(\infty)}\big).
\end{equation}
Making $\beta$ sufficiently small (by decreasing $\eta$ and,
accordingly, $r$) it follows from (\ref{y-estimate-II}) and
(\ref{zweight}) that
\[
\no e^{\omega t} y\no_{\EE_1(\infty)}+\no e^{\omega t}
z\no_{\EE_0(\infty)}\le C_{13}|y_0|_\gamma.
\]
This estimate in turn, together with (\ref{y-estimate-II-gamma}),
implies $|y(t)|_\gamma\to 0$ and $|z(t)|\to 0$ exponentially fast as
$t\to\infty$. As in the proof of Theorem~\ref{th:1} part (f) we get
$$x(t)\to x_\infty:=x_0+\int_0^\infty T(x(s),y(s),z(s))\, ds.$$
This yields existence of the limit
$$u_\infty=u_\ast+v_\infty :=u_\ast+\lim_{t\to\infty} v(t)
= u_\ast + x_\infty +\phi_s(x_\infty) +\phi_u(x_\infty)\in\cE.$$
Similar arguments as in Section 2 yield exponential convergence of
$u(t)$ to $u_\infty$. 
\end{proof}

\noindent A  result similar to Theorem~\ref{th:3} is also valid in
the setting of Section 3. We leave the details to the interested
reader.
\bigskip

\bibliographystyle{plain}

{\footnotesize

}
\end{document}